\newcolumntype{L}[1]{>{\raggedright\let\newline\\\arraybackslash\hspace{0pt}}m{#1}}
\newcolumntype{C}[1]{>{\centering\let\newline\\\arraybackslash\hspace{0pt}}m{#1}}
\newcolumntype{R}[1]{>{\raggedleft\let\newline\\\arraybackslash\hspace{0pt}}m{#1}}
\newcommand{\g}{\mathfrak{g}}
\newcommand{\N}{\mathbb{N}}
\newcommand{\Z}{\mathbb{Z}}
\newcommand{\wt}{\mathrm{wt}}
\newtheorem{thm}{Theorem}[section]
\newtheorem{proposition}[thm]{Proposition}
\newtheorem{conjecture}[thm]{Conjecture}
\newtheorem{lemma}[thm]{Lemma}
\newtheorem{corollary}[thm]{Corollary}
\newtheorem{definition}[thm]{Definition}
\newtheorem{example}[thm]{Example}
\title{Cacti, Toggles, and Reverse Plane Partitions}
\author{Devin Brown}
\author{Bal\'azs Elek}
\author{Iva Halacheva}
\address{Devin Brown, Iva Halacheva, Department of Mathematics, Northeastern University}
\email{brown.de@northeastern.edu}
\email{i.halacheva@northeastern.edu}
\address{Bal\'azs Elek, Department of Mathematics, University of British Columbia}\email{balazse@math.ubc.ca}
\thanks{Devin Brown was supported by Northeastern's AJC Merit Research Scholarship and PEAK Summit Award.\\ Iva Halacheva was supported by NSF grant  DMS-2302664.
}
\keywords{cactus, toggles, minuscule representations, reverse plane partitions}
\begin{document}
\begin{abstract}
The cactus group acts combinatorially on crystals via partial Sch{\"u}tzenberger involutions. This action has been studied extensively in type $A$ and described via Bender-Knuth involutions. We prove an analogous result for the family of crystals $B(n\varpi_1)$ in type $D$. Our main tools are combinatorial toggles acting on reverse plane partitions of height $n$. As a corollary, we show that the length one and two subdiagram elements generate the full cactus action, addressing conjectures of Dranowski, the second author, Kamnitzer, and Morton-Ferguson.
\end{abstract}

\maketitle

\section{Introduction}

The crystals for a simple Lie algebra $\g$ are combinatorial shadows of its representations that lend themselves to techniques from algebraic combinatorics. There is a rich history of the development of such techniques especially in type $A$. For instance, the Bender--Knuth (BK) involutions on semistandard Young tableaux (SSYT) were originally employed in \cite{BK72} to count different classes of plane partitions, and have since found many applications, for instance providing an elegant proof of the Littlewood-Richardson rule \cite{St02}. The relations between these involutions were studied in \cite{BK95}, as the Berenstein--Kirillov group of BK involutions acting on SSYT, or equivalently on Gelfand-Tsetlin patterns. The cactus group $C_\g$, whose connection to crystals was first studied in \cite{HeKa}, has the advantage of having a type-independent description. In type $A$, it was shown (\cite{CGP}, \cite{Hal}) that the Berenstein--Kirillov group is a quotient of $C_\g$. More recently, in \cite{Dra} the authors used \textit{heaps} and \textit{reverse plane partitions (RPPs)} to provide a type-indendent model for minuscule Demazure crystals in $ADE$ types, as well as a group of combinatorial \textit{toggles} acting, that specialize to the BK involutions in type $A$. A natural question that emerges is, can one relate these toggle operators to the cactus group beyond type A and further extend them to more general (not necessarily minuscule) crystals. In particular, for a minuscule dominant weight $\lambda$ and a corresponding Weyl group element $w^J_0$, does the cactus action factor through the toggle action, as follows:

\begin{conjecture}[\cite{Dra}]\label{cong:cactus toggle}\hfill
    \begin{enumerate}
        \item There is a surjective map $C_{\g} \to Tog(w_0^J)$ such that the action of the cactus group on the crystal $B(n\lambda) \cong RPP(w_0^J, n)$ factors through this map.
        \item In type $D_m$, the action of the cactus group on the crystal $B(n\varpi_1)$ is generated by cactus elements corresponding to length $1$ and $2$ subdiagrams of the Dynkin diagram.
    \end{enumerate}
\end{conjecture}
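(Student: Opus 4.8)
The plan is to prove the conjecture in the stated case $\g=D_m$, $\lambda=\varpi_1$ by working inside the model $B(n\varpi_1)\cong RPP(w_0^J,n)$: I would compute every partial Sch\"utzenberger involution $\xi_J$ as an explicit word in the toggles, and conversely every toggle as a word in the $\xi_J$ for subdiagrams $J$ of size at most two. First I would recall, specialized to this family, the combinatorics of \cite{Dra}: the heap of $w_0^J$ is the ``bowtie'' poset $P$ built from a chain $p_1<\cdots<p_{m-2}$, an incomparable pair $q,q'$ each covering $p_{m-2}$, and a chain $q,q'<r_1<\cdots<r_{m-2}$, so $P$ is a chain with a single doubled rank and has width two; $RPP(w_0^J,n)$ consists of the order-preserving labellings of $P$ by $\{0,1,\dots,n\}$; and the crystal operators act by explicit combinatorial moves on $P$. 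I would also record the toggles and a generating set for $Tog(w_0^J)$, note the order-reversing involution $\iota$ of $P$ given by the $180^\circ$ rotation (which swaps $p_i\leftrightarrow r_i$ and fixes or interchanges $q,q'$ according to the parity of $m$), and observe that the resulting ``rotate-and-complement'' map $f\mapsto(p\mapsto n-f(\iota p))$ of $RPP(w_0^J,n)$ is an involution; I expect it to equal the full Sch\"utzenberger involution $\xi_I$, matching the classical rectangle picture in type $A$.

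The core of the argument is to express $\xi_J$ in toggles for each connected subdiagram $J\subseteq D_m$. When $J$ is of type $A$ (a chain avoiding the branch node, or one using a single fork leg), restricting $B(n\varpi_1)$ to $\g_J$ yields a disjoint union of type-$A$ crystals carried by subchains of $P$, and the classical identity expressing evacuation as a product of Bender--Knuth involutions applies componentwise; this is the mechanism behind the type-$A$ Berenstein--Kirillov group being a cactus quotient (\cite{CGP}, \cite{Hal}). When $J$ is of type $D_k$ and contains the branch node there is no ready-made Bender--Knuth formula, and this is where I expect the genuine work. I would prove by induction on $k$ that $\xi_{D_k}$, transported through the heap bijection, coincides with the rotate-and-complement involution of the corresponding sub-bowtie, and then strip it off as a product of toggles using the type-$A$ case for the $D_{k-1}\subset D_k$ step together with the toggle(s) at the pair $q,q'$; the base case $D_3=A_3$ is covered by the type-$A$ analysis. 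The inductive step amounts to checking that the branch operators $e_{m-1},e_m$ interact with the toggle at the incomparable pair exactly as the rotation predicts, and it is this compatibility across the fork that I view as the main obstacle.

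With these expressions in hand, part~(1) follows by sending each generator $s_J$ to its toggle word: this defines a homomorphism $C_\g\to Tog(w_0^J)$, because the cactus relations hold among the $\xi_J$ acting on $B(n\varpi_1)$ by \cite{HeKa} and both sides of each relation are then toggle words, so the relations descend; the map is surjective because inverting the type-$A$ identity recovers every ``chain'' toggle and inverting the $D_3$ relation recovers the ``branch'' toggle, so the cactus action factors as $C_\g\twoheadrightarrow Tog(w_0^J)\hookrightarrow\mathrm{Sym}(RPP(w_0^J,n))$. For part~(2), since the action factors through $Tog(w_0^J)$ it suffices to show this group is generated by images of subdiagram elements of size at most two; here I would refine the surjectivity computation using the width-two thinness of $P$, writing each chain toggle as a conjugate of an adjacent size-two evacuation by shorter ones through a telescoping identity that persists because $\lambda=\varpi_1$ keeps the ambient type-$A$ components small, and reducing the branch toggle through the $D_3$ relation, which collapses further since the $D_3$-restriction of $B(n\varpi_1)$ is small enough to evacuate from pieces of size $\le 2$. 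The points requiring real care are that these reductions genuinely land in size $\le 2$ rather than merely $\le 3$, that the rotate-and-complement identifications of the $\xi_J$ are proved and not just guessed, and that the homomorphism in part~(1) is set up without circularity.
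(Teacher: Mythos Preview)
Your plan has a genuine gap at the type-$A$ step: you implicitly assume that the heap toggles $t_i$ coincide with the single-node cactus elements $c_i$ (equivalently, with the Bender--Knuth involutions on the restricted type-$A$ components). In type $A$ that is true and is precisely why the classical evacuation formula is a toggle formula; in type $D_m$ on $B(n\varpi_1)$ it is \emph{false} for $i\le m-2$. The paper's Proposition~\ref{prop:Single node} shows instead that $c_i\sim r_i$ with the nontrivial recursion $r_i=r_{i+1}t_ir_{i+1}t_ir_{i+1}$, so $c_i\ne t_i$ away from the spin nodes. Thus when you restrict to a type-$A$ subdiagram $J$ and invoke ``evacuation equals a product of BK involutions,'' what you actually obtain is $c_J$ as a word in the single-node $c_i$, not in the $t_i$; this contributes nothing to Part~(1) until each $c_i$ is separately expressed in toggles, and your proposal has no mechanism for discovering or proving that recursion. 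Relatedly, your statement that the type-$A$ restriction is ``carried by subchains of $P$'' is not how the branching works: the restricted components are type-$A$ crystals of various shapes, not sub-RPPs of a subchain, and the $D_m$ heap toggles do not act on them as type-$A$ BK moves.

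The paper's route is accordingly quite different. It works in the one-row tableau model rather than the heap model and proves everything by explicit case analysis on the multiplicity vector $(a,b,c,\bar c,\bar b,\bar a)$. The engine is Proposition~\ref{prop:Single node} (the recursion $c_k\sim c_{k+1}t_kc_{k+1}t_kc_{k+1}$) together with a package of intertwining relations such as $t_kc_{k+1}f_k\sim f_{k+1}t_kc_{k+1}$ (Lemmas~\ref{lemma:cactus toggle commute}--\ref{lemma:connected components}). From these it derives, for a type-$A$ subdiagram $J=[i,j]$, the formula $c_J\sim(c_jt_{j-1}\cdots t_i)(c_jt_{j-1}\cdots t_{i+1})\cdots(c_jt_{j-1})c_j$, which mixes toggles with repeated copies of $c_j=r_j$ and hence with toggles indexed \emph{outside} $J$ (Theorem~\ref{thm:type A subdiagram}); and for a type-$D$ subdiagram it shows $c_J\sim c_{i_1}\cdots c_{i_\ell}$ for any reduced word of $w_{0,J}$ by directly verifying the Sch\"utzenberger intertwining properties (Theorem~\ref{thm:type D subdiagram}). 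Your rotate-and-complement description of $\xi_{D_k}$ may be correct, but you flag it yourself as the main obstacle and offer no argument; the paper bypasses it entirely. Part~(2) then drops out of the $|J|=2$ base case $c_{\{j-1,j\}}\sim c_jt_{j-1}c_j$, giving $t_{j-1}\sim c_jc_{\{j-1,j\}}c_j$, so every toggle---and hence every $c_J$---is a word in length-$1$ and length-$2$ cactus elements.
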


In this paper, we prove both parts of Conjecture \ref{cong:cactus toggle} for the crystal $B(n\varpi_1)$ in type $D_m$. In particular, we give an explicit description of the cactus operators in terms of toggles. This reduces the action to significantly fewer cactus generators than originally required.

Further questions of interest are connecting the cactus and toggle actions in type D to promotion in algebraic combinatorics. A potential application of our result is a simplified description of the affine crystal operators $e_0$ and $f_0$ for the Kirillov-Reshetikhin crystals $B^{1,n}$ for (affine) type $D_m^{(1)}$, using the cactus action description through toggles which are much easier to compute on $B(n\varpi_1)$. To complete the study of minuscule weights in types ADE, one needs to consider the two spin nodes in type $D_m$, two more cases in $E_6$, and one case in $E_7$.

\section{Crystals and Tensor Products}

Throughout, we let $\g$ denote a simple complex Lie algebra with Dynkin diagram $I$, simple roots $\alpha_i$, Weyl group $W$ generated by the simple reflections $s_i$, $i \in I$, weight lattice $P$, and dominant weights $P_+$ (see for instance \cite{Hum}). We will focus on the Lie algebra of type $D_m$, where $I$ is labeled such that $m-1$ and $m$ denote the spin nodes. 

Crystal bases were originally introduced by Kashiwara \cite{Kas90, Kas91}, as corresponding to bases for representations of the quantum group $U_q(\g)$ in a certain limit as $q$ goes to $0$. Although there is a more general notion of abstract crystals, we will only consider crystals arising from representations (see for instance  \cite[Section 4.5]{HK} for more details). A \textbf{(normal) $\g$-crystal} is a finite set $B$ together with maps $\mathrm{wt}\colon B\to P$, $e_i\colon B \to B\sqcup \{0\}$ (the raising operators), $f_i\colon B \to B\sqcup \{0\}$ (the lowering operators)  for each $i\in I$, as well as $\varepsilon_i, \phi_i:B \rightarrow \Z$, describing $i$-string lengths, defined by $\phi_i(b) = \max \{n \mid f_i^n(b)\neq 0\}$ and $\varepsilon_i(b) = \max\{n \mid e_i^n(b)\neq 0\}$ which satisfy several conditions coming from the associated representation of $\g$. Notably, the raising and lowering operators are partial inverses: if $b_1, b_2 \in B$, then $e_i(b_1) = b_2$ if and only if $f_i(b_2) = b_1$.  We focus on $\g$-crystals whose highest weight is minuscule or a multiple of a minuscule weight.

\begin{definition}
    A representation $V$ of $\g$ is called \textbf{minuscule} if $W$ acts transitively on the weights of $V$. A weight $\lambda\in P_+$ is called \textbf{minuscule} if $V(\lambda)$ is minuscule.
\end{definition}

In type $A_m$, the minuscule weights are exactly all the fundamental weights $\varpi_k = \epsilon_1 + \cdots + \epsilon_k, \; 1 \leq k \leq m$. In type $D_m$, the minuscule weights are $\varpi_1$, $\varpi_{m-1}$, and $\varpi_m$.

\begin{example}\label{eg:mincrys}
Using the generalized semistandard Young tableaux (SSYT) model for representations of the Lie algebra of type $D_m$ (see \cite[Section 8.5]{HK}), we have the partially ordered set (poset, in short) $\mathbf{N}$ of possible fillings, $1 < 2 < \cdots < m-1 < m,\overline{m} < \overline{m-1} < \cdots < \overline{2} < \overline{1}$ where $m$ and $\overline{m}$ are incomparable. The crystal $B(\varpi_1)$ can be realized as shown in Figure \ref{fig:Standard-crystal}, where the box with filling $i$ has weight $\epsilon_i$, and $\overline{i}$ has weight $-\epsilon_i$. 
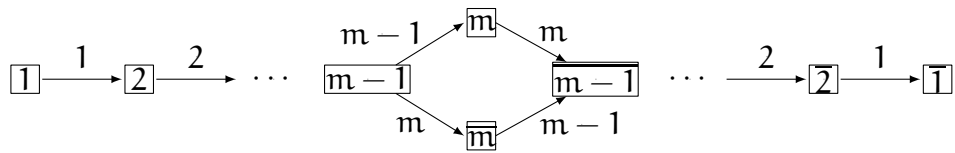
\begin{figure}[h!]
    \centering
    \begin{tikzpicture}[>=latex,line join=bevel, scale = 0.75]
        \draw[black, thin] (-0.25,-0.25) rectangle (0.25,0.25);
        \draw[black, thin] (1.75,-0.25) rectangle (2.25,0.25);
        \draw[black, thin] (5.25, -0.25) rectangle (6.75, 0.25);
        \draw[black, thin] (7.75, 0.75) rectangle (8.25, 1.25);
        \draw[black, thin] (7.75, -1.25) rectangle (8.25,-0.75);
        \draw[black, thin] (9.25,-0.3) rectangle (10.75,0.3);
        \draw[black, thin] (13.75,-0.25) rectangle (14.25,0.25);
        \draw[black, thin] (15.75,-0.25) rectangle (16.25,0.25);
        \filldraw[black] (0,0) circle (0pt) node[anchor=center]{$1$};
        \filldraw[black] (2,0) circle (0pt) node[anchor=center]{$2$};
        \filldraw[black] (4.35,0) circle (0pt) node[anchor=center]{$\cdots$};
        \filldraw[black] (6,0) circle (0pt) node[anchor=center]{$m-1$};
        \filldraw[black] (8,1) circle (0pt) node[anchor=center]{$m$};
        \filldraw[black] (8,-1) circle (0pt) node[anchor=center]{$\overline{m}$};
        \filldraw[black] (10,0) circle (0pt) node[anchor=center]{$\overline{m-1}$};
        \filldraw[black] (11.65,0) circle (0pt) node[anchor=center]{$\cdots$};
        \filldraw[black] (14,0) circle (0pt) node[anchor=center]{$\overline{2}$};
        \filldraw[black] (16,0) circle (0pt) node[anchor=center]{$\overline{1}$};
        \draw[black, thin, ->] (0.30,0) -- (1.70,0);
        \draw[black, thin, ->] (2.30,0) -- (3.70,0);
        \draw[black, thin, ->] (6.5,0.25) -- (7.70,1);
        \draw[black, thin, ->] (6.5,-0.25) -- (7.70,-1);
        \draw[black, thin, ->] (8.25,1) -- (9.5,0.3);
        \draw[black, thin, ->] (8.25,-1) -- (9.5,-0.3);
        \draw[black, thin, ->] (12.30,0) -- (13.70,0);
        \draw[black, thin, ->] (14.30,0) -- (15.70,0);
        \filldraw[black] (1,0.4) circle (0pt) node[anchor=center]{$1$};
        \filldraw[black] (3,0.4) circle (0pt) node[anchor=center]{$2$};
        \filldraw[black] (6.25,0.8) circle (0pt) node[anchor=center]{$m-1$};
        \filldraw[black] (6.75,-0.8) circle (0pt) node[anchor=center]{$m$};
        \filldraw[black] (9.75,-0.8) circle (0pt) node[anchor=center]{$m-1$};
        \filldraw[black] (9.25,0.8) circle (0pt) node[anchor=center]{$m$};
        \filldraw[black] (13,0.4) circle (0pt) node[anchor=center]{$2$};
        \filldraw[black] (15,0.4) circle (0pt) node[anchor=center]{$1$};
    \end{tikzpicture}
    \caption{The crystal $B(\varpi_1)$ in type $D_m$.}
    \label{fig:Standard-crystal}
\end{figure}
\end{example}

Crystals have a particularly nice tensor product rule, which we recall next. Note that we use a different convention from Kashiwara's convention (as in \cite{HK}), instead following \cite[Section 2.3]{BS} and the tensor product rule implemented in SageMath.

\begin{definition} If $B_1$ and $B_2$ are $\g$-crystals, then one can define a $\g$-crystal structure on the set
    $B_1\otimes B_2 = \{b_1 \otimes b_2 \mid b_1\in B_1, b_2\in B_2\}$ with $\wt(b_1 \otimes b_2) = \wt(b_1) + \wt(b_2)$, and the following lowering operators
    \begin{align*}
        f_i(b_1\otimes b_2) = \begin{cases}
            b_1 \otimes f_i(b_2) & \text{ if } \varepsilon_i(b_1) < \phi_i(b_2)\\
            f_i(b_1) \otimes b_2 & \text{ if } \varepsilon_i(b_1) \ge \phi_i(b_2).
        \end{cases}
    \end{align*}
\end{definition}
  
This rule extends to $B^{\otimes n}$ for any $\g$-crystal $B$ as follows.

\begin{proposition}[\cite{Tin08}, Corollary 2.4]
    For any $b = b_1\otimes \cdots \otimes b_n \in B^{\otimes n}$, define the sign pattern of $b$ to be the following string of $+$'s and $-$'s: $\phi_i(b_1)$ many $+$'s, $\varepsilon_i(b_1)$ many $-$'s, $\phi_i(b_2)$ many $+$'s, $\varepsilon_i(b_2)$ many $-$'s, etc. Pair up and cancel substrings $-+$, to get a sequence $+\hdots + - \hdots -$. If $j$ is the index corresponding to the rightmost unpaired $+$ and $k$ is the index of the leftmost unpaired $-$, then $f_i(b) = b_1 \otimes \cdots \otimes f_i(b_j) \otimes \cdots \otimes b_n$ and $e_i(b) = b_1 \otimes \cdots \otimes e_i(b_k) \otimes \cdots \otimes b_n$. We have $f_i(b) = 0$ if there is no such $j$, and $e_i(b) = 0$ if there is no such $k$.
\end{proposition}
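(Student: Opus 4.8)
The plan is to argue by induction on $n$, using the two-factor tensor product rule both as the base of the recursion and as the engine of the inductive step. For $n=1$ the statement is essentially vacuous: the sign pattern of $b_1$ is $\phi_i(b_1)$ plus-signs followed by $\varepsilon_i(b_1)$ minus-signs, already in reduced form, so the rightmost unpaired $+$ (if any) sits in the unique factor $b_1$ and likewise for the leftmost unpaired $-$. Before running the induction I would record the key bookkeeping fact about the cancellation procedure: repeatedly deleting adjacent substrings $-+$ from a word in $\{+,-\}$ is a confluent rewriting process, so the terminal word $+\cdots+-\cdots-$ and — more importantly — the identity of which original symbols survive do not depend on the order in which cancellations are carried out. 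In particular one may first reduce a prefix completely and then continue, which is exactly what matches the grouping $B^{\otimes n} = B^{\otimes(n-1)} \otimes B$.

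For the inductive step, write $b = b' \otimes b_n$ with $b' = b_1\otimes\cdots\otimes b_{n-1}$. The inductive hypothesis supplies two things: the sign pattern of $b'$ reduces to $(+)^{\phi_i(b')}(-)^{\varepsilon_i(b')}$, which in particular identifies $\phi_i(b')$ and $\varepsilon_i(b')$ as the numbers of surviving plus- and minus-signs; and $f_i$ (resp.\ $e_i$) acts on the factor of $b'$ carrying the rightmost unpaired $+$ (resp.\ leftmost unpaired $-$). Now the sign pattern of $b$ is the reduced word for $b'$ followed by the block $(+)^{\phi_i(b_n)}(-)^{\varepsilon_i(b_n)}$, namely
\[
\underbrace{+\cdots+}_{\phi_i(b')}\;\underbrace{-\cdots-}_{\varepsilon_i(b')}\;\underbrace{+\cdots+}_{\phi_i(b_n)}\;\underbrace{-\cdots-}_{\varepsilon_i(b_n)},
\]
and the only cancellations left to perform are between the $\varepsilon_i(b')$ trailing minus-signs of $b'$ and the $\phi_i(b_n)$ leading plus-signs of $b_n$.

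I would then split into the two cases of the two-factor rule. If $\varepsilon_i(b') < \phi_i(b_n)$, every surviving $-$ of $b'$ is cancelled, so all remaining plus-signs lie in the $b_n$-block; the rightmost unpaired $+$ is in $b_n$, matching $f_i(b) = b_1\otimes\cdots\otimes f_i(b_n)$, and the counts give $\phi_i(b) = \phi_i(b') + \phi_i(b_n) - \varepsilon_i(b')$ and $\varepsilon_i(b) = \varepsilon_i(b_n)$. If instead $\varepsilon_i(b') \ge \phi_i(b_n)$, all plus-signs of the $b_n$-block are cancelled, the rightmost surviving $+$ is unchanged from that of $b'$ and hence lies in the appropriate factor $b_j$ with $j \le n-1$, matching $f_i(b) = b_1\otimes\cdots\otimes f_i(b_j)\otimes\cdots\otimes b_n$ by the inductive hypothesis. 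The analysis of $e_i$ via the leftmost unpaired $-$ is the mirror image (one may instead group as $B\otimes B^{\otimes(n-1)}$), and the degenerate cases — no unpaired $+$, resp.\ no unpaired $-$ — are exactly $\phi_i(b)=0$, resp.\ $\varepsilon_i(b)=0$, where $f_i(b)=0$, resp.\ $e_i(b)=0$, by the definitions of $\phi_i,\varepsilon_i$. The one point demanding care, and the only genuine obstacle, is the confluence/associativity claim for the $-+$ cancellation together with tracking which physical symbol (hence which tensor factor) is the surviving extremal one; everything else is a direct unwinding of the two-factor definition.
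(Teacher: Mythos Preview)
The paper does not give its own proof of this proposition: it is stated with a citation to \cite{Tin08}, Corollary 2.4, and used as background. So there is no in-paper argument to compare against.

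Your proof is correct and is in fact the standard way to establish the signature rule. The induction on $n$ via the grouping $B^{\otimes n}=B^{\otimes(n-1)}\otimes B$ matches the two-factor rule exactly as you describe, and the confluence of $-+$ cancellation (equivalently, the well-definedness of parenthesis matching) is the right structural observation that lets you reduce the prefix first. One small point worth making explicit in a written-out version: you are tacitly strengthening the inductive hypothesis to include the identity ``number of surviving $+$'s equals $\phi_i$ and number of surviving $-$'s equals $\varepsilon_i$,'' since you need $\varepsilon_i(b')$ to be the count of surviving $-$'s in order to invoke the two-factor rule in the inductive step. You do verify this in the case split, but you should state it as part of what is being proved by induction rather than as a consequence of the hypothesis alone. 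With that clarification, the argument is complete.
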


\begin{example}\label{eg:multmincrys}
Continuing Example \ref{eg:mincrys}, one can realize more generally the crystal $B(n\varpi_1)$ as the collection of tableaux with shape $1\times n$ and fillings in $\mathbf{N}$ that are weakly increasing from left to right. In particular, $m$ and $\overline{m}$ do not appear simultaneously. The raising and lowering operators on this set can be defined via the embedding $B(n\varpi_1)\hookrightarrow B(\varpi_1)^{\otimes n}$, given by {$$\small\begin{ytableau} b_1 & b_2 & \cdots & b_n
\end{ytableau} \stackrel{\mapsto}{\phantom{a}} \begin{ytableau}
    b_1
\end{ytableau} \stackrel{\otimes}{\phantom{a}}  \begin{ytableau}
    b_2
\end{ytableau} \stackrel{\otimes}{\phantom{a}}  \cdots \stackrel{\otimes}{\phantom{a}} 
\begin{ytableau}
        b_n
\end{ytableau},$$} and are determined by the tensor product rule on $B(\varpi_1)^{\otimes n}$.
\end{example}

\section{Heaps and Reverse Plane Partitions}

 Heaps, alongside tableaux, are the key combinatorial structure we will use to study crystals. We start by recalling their construction and properties (see \cite{St96,Pro,St01}).

\begin{definition}
    Given a reduced word $\mathbf{w} = (s_{i_1}, \dots, s_{i_l})$ for $w\in W$, the corresponding \textbf{heap} $H(\mathbf{w})$ is the poset $(\{1,2,\dots, l\},\prec)$ defined as the transitive closure of the relation: $a \prec b$ if $a>b$ and the nodes $i_a$ and $i_b$ are adjacent in the Dynkin diagram I.
\end{definition}

    Using the abacus analogy due to A. Kleshchev and A. Ram \cite{KR}, one can construct a heap by starting with a base the shape of I, and placing beads corresponding to the reflections in the reduced word for $\mathbf{w} = (s_{i_1}, \dots, s_{i_l})$, read from right to left, onto the corresponding runners. The idea is that beads placed above adjacent vertices will be prevented from falling to the bottom of the abacus as pictured in Figure \ref{fig:heaps}.

We can define a map $\pi\colon H(\mathbf{w}) \to I$ that projects a bead onto its corresponding runner, i.e. the reflection $s_i$ maps to the vertex $i$. The condition that $\mathbf{w}$ is a reduced word for $w$ tells us that $\pi^{-1}(i)$ is totally ordered for each $i\in I$, since otherwise there would be two occurrences of $s_{i}$ that are not separated by adjacent reflections, allowing a cancellation. 

It can be the case that two reduced words for an element of $W$ give rise to non-isomorphic heaps.
    
\begin{example}
    Consider the reduced words $\mathbf{w} = (s_1, s_3, s_2, s_1)$ and $\mathbf{w^\prime} = (s_3, s_2, s_1, s_2)$ in type $A_3$. Then $\mathbf{w}$ and $\mathbf{w^\prime}$ are both reduced words for the same element of $W$. The heaps $H(\mathbf{w})$ and $H(\mathbf{w^\prime})$ are not isomorphic since $H(\mathbf{w^\prime})$ is totally ordered, whereas $H(\mathbf{w})$ is not (see Figure \ref{fig:heaps}).
\end{example}
    
    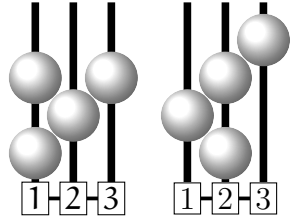
\begin{figure}[h]
    \centering
    \begin{tikzpicture}[scale = 0.5]
    \foreach \x in {0,1,2} 
      \draw [line width=1mm] (\x, 0) -- (\x,5.2);
    \foreach \x in {0,1,2} 
      \shade[shading=ball, ball color=white] (\x,\x+1.2) circle (0.7);
    \foreach \x in {0} 
      \shade[shading=ball, ball color=white] (\x,\x+3.2) circle (0.7);
    \draw [line width=0.5mm] (0,0) -- (2,0);
    \foreach \x/\xtext in {0/1,1/2,2/3} 
      \draw (\x cm,0 pt) node[fill=white,inner sep=2pt,draw] {${\xtext}$};
      
    \foreach \x in {0,1,2} 
      \draw [line width=1mm] (\x+4, 0) -- (\x+4,5.2);
    \foreach \x in {0,1,2} 
      \shade[shading=ball, ball color=white] (\x+4,\x+2.2) circle (0.7);
    \foreach \x in {0} 
      \shade[shading=ball, ball color=white] (\x+5,\x+1.2) circle (0.7);
    \draw [line width=0.5mm] (4,0) -- (6,0);
    \draw (4 cm,0 pt) node[fill=white,inner sep=2pt,draw] {\text{1}};
    \draw (5 cm,0 pt) node[fill=white,inner sep=2pt,draw] {\text{2}};
    \draw (6 cm,0 pt) node[fill=white,inner sep=2pt,draw] {\text{3}};
  \end{tikzpicture}
  \caption{$H(\mathbf{w})$ and $H(\mathbf{w^\prime})$ in type $A_3$.}
  \label{fig:heaps}
\end{figure}

\begin{definition}
    An element $w\in W$ is called \textbf{fully commutative} if any two reduced words for $w$ can be obtained from each other only by commuting elements, e.g. $s_2s_1s_3s_2 = s_2s_3s_1s_2$ in $A_3$.
\end{definition}

If $w$ is fully commutative, then any two reduced words for $w$ give rise to isomorphic posets, so we may refer to the heap $H(w)$. We next recall a generalization of such heaps that captures the structure of crystals with highest weight a multiple of a minuscule weight. We follow the exposition and notation in \cite[Section 2]{Dra}.

\begin{definition} A \textbf{reverse plane partition} (RPP) of shape $H(w)$ and height $n\in \N$ is a function $\Phi: H(w) \to \{0,1,\dots, n\}$ such that if $x\preceq y$, then $\Phi(x) \ge \Phi(y)$. We denote the collection of reverse plane partitions by $\text{RPP}(w,n)$.
\end{definition}

Using the abacus analogy, RPPs correspond to labelling the beads in a heap with integers from $0$ to $n$ so that they decrease from the base to the top. We can also represent RPPs as increasing chains of order ideals.

\begin{definition}
    If $(\mathcal{P}, \preceq)$ is a poset, then we call a subset $S\subseteq \mathcal{P}$ an \textbf{order ideal} of $\mathcal{P}$ if for all $x\in S$ and $y\in \mathcal{P}$, if $y\preceq x$, then $y\in S$. Let $J(\mathcal{P})$ denote the collection of order ideals of $\mathcal{P}$.
\end{definition}

Noting that $J(\mathcal{P})$ is also a poset ordered under inclusion, we have the following lemma embedding $RPP(w,n)$ in $J(H(w))^n$.

\begin{lemma}[\cite{Dra}, Lemma 2.28]\label{lem:Increasing Chain}
    There is an injective map $RPP(w,n) \hookrightarrow J(H(w))^n$ given by $\Phi \mapsto (\phi_1, \dots, \phi_n)$ where $\phi_i$ is defined as the pre-image $\Phi^{-1}(\{n-i+1, \dots, n\})$. The image of this map is the collection of tuples $\{(\phi_1, \dots, \phi_n) \in J(H(w))^n \mid \forall \; i, \; \phi_i \subseteq \phi_{i+1}\}$.
\end{lemma}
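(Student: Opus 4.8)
The plan is to prove Lemma~\ref{lem:Increasing Chain} by directly checking that the given map is well-defined, injective, and has the claimed image. First I would verify that for each RPP $\Phi \in RPP(w,n)$ and each $i \in \{1,\dots,n\}$, the set $\phi_i = \Phi^{-1}(\{n-i+1,\dots,n\})$ is indeed an order ideal of $H(w)$. This follows from the defining property of an RPP: if $x \in \phi_i$ and $y \preceq x$, then $\Phi(y) \ge \Phi(x) \ge n-i+1$, so $y \in \phi_i$. Next I would observe that the sets $\{n-i+1,\dots,n\}$ are nested and increasing in $i$ (they are $\{n\} \subseteq \{n-1,n\} \subseteq \cdots \subseteq \{1,\dots,n\}$), hence $\phi_i \subseteq \phi_{i+1}$ for all $i$, so the image lands in the claimed subset of $J(H(w))^n$.

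For injectivity, I would reconstruct $\Phi$ from the tuple $(\phi_1,\dots,\phi_n)$: for $x \in H(w)$, the value $\Phi(x)$ is determined by which $\phi_i$ contain $x$. Concretely, $x \in \phi_i \iff \Phi(x) \ge n-i+1 \iff i \ge n - \Phi(x) + 1$, so $x$ belongs to exactly $\phi_{n-\Phi(x)+1}, \phi_{n-\Phi(x)+2}, \dots, \phi_n$, i.e. to exactly $\Phi(x)$ of the order ideals. Thus $\Phi(x) = \#\{i : x \in \phi_i\}$, which recovers $\Phi$ uniquely from its image; hence the map is injective. (One should note the edge case $\Phi(x) = 0$, where $x$ lies in none of the $\phi_i$, which is consistent since $\phi_i \subseteq H(w)$ and the smallest set in the chain is $\phi_1 = \Phi^{-1}(\{n\})$.)

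Finally, for surjectivity onto the stated image, I would take an arbitrary chain $\phi_1 \subseteq \phi_2 \subseteq \cdots \subseteq \phi_n$ of order ideals and define $\Phi(x) = \#\{i : x \in \phi_i\}$; by the nesting, $x \in \phi_i$ precisely for $i$ in an up-set $\{j, j+1, \dots, n\}$, so $\Phi(x) \in \{0,1,\dots,n\}$ and $\phi_i = \Phi^{-1}(\{n-i+1,\dots,n\})$ as required. It remains to check $\Phi$ is an RPP: if $x \preceq y$, then since each $\phi_i$ is an order ideal, $y \in \phi_i \Rightarrow x \in \phi_i$, so $\{i : y \in \phi_i\} \subseteq \{i : x \in \phi_i\}$ and therefore $\Phi(x) \ge \Phi(y)$. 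This shows the map is a bijection onto the claimed image.

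Since every step is an elementary unwinding of the definitions of order ideal, RPP, and the nested family of sets, there is no serious obstacle here; the only thing requiring a little care is bookkeeping the indexing convention ($n - i + 1$ versus $i$) consistently, and handling the boundary value $\Phi(x) = 0$ so that the reconstruction formula and the image description agree on the nose. I would present the argument as the three short verifications above (well-definedness, the reconstruction/injectivity, and the image characterization), keeping the count formula $\Phi(x) = \#\{i : x \in \phi_i\}$ as the central observation tying them together.
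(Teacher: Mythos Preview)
Your proof is correct and complete. The paper itself does not give a proof of this lemma (it is cited from \cite{Dra}); immediately after the statement the paper only describes the inverse map informally, by saying one labels the elements of $\phi_1$ with $n$, the elements of $\phi_2\setminus\phi_1$ with $n-1$, and so on. This is exactly your reconstruction formula $\Phi(x)=\#\{i : x\in\phi_i\}$ in different words, so your approach matches the paper's sketch and simply supplies the routine verifications (well-definedness, injectivity, image) that the paper omits.
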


In order to read off the RPP from a chain of order ideals $(\phi_1, \dots, \phi_n)$, one labels the elements of $\phi_1$ with $n$, the elements of $\phi_2$ not included in $\phi_1$ with $n-1$, and so on, labeling the elements of $H(w)$ not included in $\phi_n$ with $0$.

\begin{example}\label{ex:RPP}
    If $w = s_3s_4s_2s_1s_3s_2\in W$ in type $A_4$, then $w$ is fully commutative. Figure \ref{fig:RppExample} depicts an RPP $\Phi$ of shape $H(w)$ and height $3$ and the corresponding chain of order ideals, where we identify an order ideal with its indicator function in $RPP(w,1)$.
    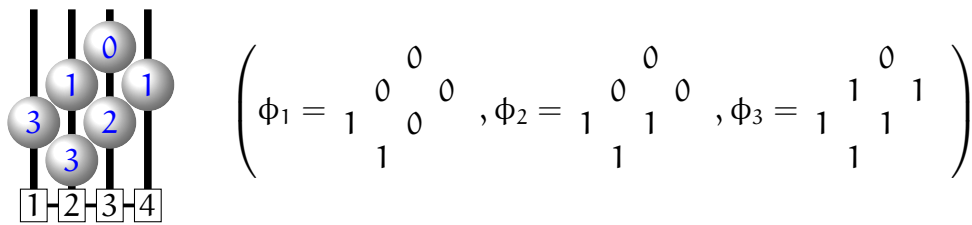
\begin{figure}[h]
        \centering
        \begin{tikzpicture}[scale = 0.5]
            \foreach \x in {0,1,2,3} 
                \draw [line width=1mm] (\x, 0) -- (\x,5.2);
            \foreach \x in {0,1,2} 
                \shade[shading=ball, ball color=white] (\x,\x+2.2) circle (0.7);
            \foreach \x in {0,1,2} 
                \shade[shading=ball, ball color=white] (\x+1,\x+1.2) circle (0.7);
            \draw [line width=0.5mm] (0,0) -- (3,0);
            \foreach \x/\xtext in {0/1,1/2,2/3,3/4} 
                \draw (\x cm,0 pt) node[fill=white,inner sep=2pt,draw] {${\xtext}$};
            \draw (1,1.2) node[fill=none] {\textcolor{blue}{$3$}};
            \draw (0,2.2) node[fill=none] {\textcolor{blue}{$3$}};
            \draw (2,2.2) node[fill=none] {\textcolor{blue}{$2$}};
            \draw (1,3.2) node[fill=none] {\textcolor{blue}{$1$}};
            \draw (2,4.2) node[fill=none] {\textcolor{blue}{$0$}};
            \draw (3,3.2) node[fill=none] {\textcolor{blue}{$1$}};
            \draw (15,2.5) node[fill=none] {\textcolor{black}{$\left (\phi_1 = \arraycolsep=3pt\def\arraystretch{0.9}\begin{array}{ccccccc}
	& & 0 & & \\
	& 0 & & 0 & \\
	1 & & 0 & & \\
	& 1 & & & 
	\end{array}, \phi_2 = \arraycolsep=3pt\def\arraystretch{0.9}\begin{array}{ccccccc}
	& & 0 & & \\
	& 0 & & 0 & \\
	1 & & 1 & & \\
	& 1 & & & 
	\end{array}, \phi_3 = \arraycolsep=3pt\def\arraystretch{0.9}\begin{array}{ccccccc}
	& & 0 & & \\
	& 1 & & 1 & \\
	1 & & 1 & & \\
	& 1 & & & 
	\end{array}			
    \right )$}};
        \end{tikzpicture}
            
        \caption{An element $\Phi$ of $RPP(w, 3)$ and the corresponding chain of order ideals.}
        \label{fig:RppExample}
    \end{figure}
\end{example}

\section{Minuscule crystals as heaps}

The structure of the crystal $B(\lambda)$ for a minuscule weight $\lambda$ can be combinatorially realized through heaps via the following construction. Note that since we work with $\g$ a simple Lie algebra, then all the minuscule highest weights are fundamental. Consider $\lambda=\varpi_j \in P_+$, and $J=I\setminus\{j\}$. Then $W_J=\left\langle s_j~:~j \in J\right\rangle \leq W$ is the stabilizer of $\lambda$, and let $W^J$ denote the set of minimal length representatives of the cosets $W/W^J$. In particular, let $w_0$ denote the longest element in $W$. If $w^J_0$ denotes the minimal length representative of $w_0W_J$, then it is of minimal length such that $w^J_0\lambda=w_0\lambda$, and $W^J = \{v\in W\colon v\le_L w_0^J\}$ where $v\le_L w$ is the \textit{weak left order} (i.e. there exist reduced expressions such that $v$ appears as a terminal subword of $w$).

\begin{thm}[\cite{St96}, Lemma 3.1, Theorem 6.1, Theorem 7.1]\hfill
\begin{enumerate}
    \item If $w \in W$ is fully-commutative, then $\{v\in W \colon v\le_L w\} \cong J(H(w))$ as posets.
    \item For $\lambda$, $J$ as above, $\lambda$ is minuscule if and only if every element of $W^J$ is fully commutative. 
\end{enumerate}
\end{thm}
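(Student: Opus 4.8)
The plan is to prove the two parts separately, after one common observation: for a fully commutative $w$ all reduced words coincide up to commutations, so they are precisely the words obtained by listing the beads of the heap $H(w)$ along its linear extensions (in the right-to-left abacus convention), a transposition of two consecutive $\prec$-incomparable beads being exactly a commutation of two reflections with non-adjacent Dynkin labels. For part (1), I would construct an explicit order isomorphism $\Psi\colon J(H(w))\to\{v\in W:v\le_L w\}$. Send an order ideal $S$ to $\Psi(S)=\prod_{x\in S}s_{\pi(x)}$, the factors multiplied in any order extending $(S,\prec)$ to a total order; this is well defined because two such orders differ by transpositions of $\prec$-incomparable beads, hence by commutations. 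Extending a linear extension of $(S,\prec)$ to one of $(H(w),\prec)$ shows the word of $\Psi(S)$ occurs as a terminal segment of a reduced word for $w$, so $\Psi(S)$ is reduced and $\le_L w$. Conversely, if $v\le_L w$, take a reduced word for $w$ terminating in a reduced word for $v$; being a linear extension of $H(w)$, its last $\ell(v)$ positions form an order ideal $S$ with $\Psi(S)=v$, so $\Psi$ is onto. Monotonicity is immediate one way ($S\subseteq S'$: extend a linear extension of $S$ first within $S'$, then within $H(w)$); the other way uses that $v\le_L w$ forces a canonical order-ideal embedding $H(v)\hookrightarrow H(w)$, whence $\Psi(S)\le_L\Psi(S')$ gives $S\subseteq S'$, and injectivity follows.

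For part (2), write $\lambda=\varpi_j$, $J=I\setminus\{j\}$. First I would reduce: since $W^J=\{v:v\le_L w_0^J\}$ and full commutativity passes downward under $\le_L$ (a reduced word of $v$ may be substituted into a reduced word of $w$ witnessing $v\le_L w$, so a braid factor in $v$ produces one in $w$), the statement ``every element of $W^J$ is fully commutative'' is equivalent to ``$w_0^J$ is fully commutative.'' I would then invoke two standard facts valid in the simply-laced setting that includes type $D$: (a) $\lambda$ is minuscule iff $\langle\lambda,\alpha^\vee\rangle\le 1$ for every $\alpha\in\Phi^+$; and (b) $w$ is fully commutative iff $N(w)=\{\alpha\in\Phi^+:w\alpha\in\Phi^-\}$ contains no triple $\{\delta_1,\delta_2,\delta_1+\delta_2\}$ of positive roots (the rank-two ``no braid'' criterion). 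I also use that $N(w_0^J)=\{\alpha\in\Phi^+:\langle\lambda,\alpha^\vee\rangle\ge 1\}$, since this set contains $N(w_0^J)$ and has cardinality $|\Phi^+|-|\Phi_J^+|=\ell(w_0^J)$, and that $\langle\lambda,\alpha^\vee\rangle$ equals the coefficient of $\alpha_j$ in $\alpha$.

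Given this, the forward implication is quick: if $N(w_0^J)$ contained a triple $\{\delta_1,\delta_2,\delta_1+\delta_2\}$, minusculeness would force $\langle\lambda,\delta_1^\vee\rangle=\langle\lambda,\delta_2^\vee\rangle=1$, while $\{\delta_1,\delta_2\}$ generating an $A_2$ subsystem gives $(\delta_1+\delta_2)^\vee=\delta_1^\vee+\delta_2^\vee$, so $\langle\lambda,(\delta_1+\delta_2)^\vee\rangle=2$, contradicting $\delta_1+\delta_2\in N(w_0^J)$; hence $w_0^J$ is fully commutative. For the converse, assume $\lambda$ is not minuscule; by (a) some positive root has $\alpha_j$-coefficient $\ge 2$, so I would choose $\beta_0$ minimal in the root poset with $\alpha_j$-coefficient $\ge 2$. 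Minimality forces $\beta_0-\alpha_i\in\Phi$ only for $i=j$, so $\beta_0$ has $\alpha_j$-coefficient exactly $2$ and $\gamma:=\beta_0-\alpha_j\in\Phi^+$ has $\alpha_j$-coefficient $1$; then $\{\alpha_j,\gamma,\alpha_j+\gamma=\beta_0\}$ is an $A_2$ triple lying in $N(w_0^J)$ (its members have $\alpha_j$-coefficients $1,1,2$), so $w_0^J$ is not fully commutative by (b).

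The main obstacle, in my view, is supplying clean proofs of the two ``black boxes'' I lean on: the canonical terminal-subword/order-ideal correspondence underpinning part (1) — that $v\le_L w$ with $w$ fully commutative yields a choice-independent order-ideal embedding $H(v)\hookrightarrow H(w)$, which fails without full commutativity — and the rank-two no-braid criterion (b) used in part (2). These are the technical cores of the quoted results of Stembridge; everything else is bookkeeping with linear extensions, inversion sets, and the root poset.
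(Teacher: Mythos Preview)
The paper does not supply its own proof of this theorem: it is quoted verbatim from Stembridge \cite{St96} (Lemma~3.1, Theorem~6.1, Theorem~7.1) and used as a black box, with only the remark that it holds in greater generality. Consequently there is no argument in the paper to compare your proposal against.

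That said, your outline is a faithful reconstruction of how these results are actually proved. For part~(1), the map $\Psi$ you describe is exactly Stembridge's bijection between order ideals of the heap and the weak-order interval below $w$; the one point you leave slightly implicit is why $\Psi$ is injective (equivalently order-reflecting). The cleanest way to fill this is to note that an order ideal $S\subseteq H(w)$ is determined by the numbers $|S\cap\pi^{-1}(i)|$ (since each fiber is totally ordered and $S$ takes a bottom segment of each), and these numbers are exactly the letter multiplicities in any reduced word of $\Psi(S)$, which are well defined because $\Psi(S)$ is again fully commutative. Your ``canonical embedding $H(v)\hookrightarrow H(w)$'' amounts to the same thing.

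For part~(2), your reduction to $w_0^J$ and the root-theoretic argument are correct in the simply-laced setting. The criterion you label (b) --- that $w$ is fully commutative iff $N(w)$ contains no pair summing to a root --- is a genuine theorem (Fan, Stembridge) and is precisely the ``black box'' you flag; in non-simply-laced types it must be modified, but the paper only needs type $D$, so your restriction is harmless. Your computation $N(w_0^J)=\Phi^+\setminus\Phi_J^+=\{\alpha:\langle\lambda,\alpha^\vee\rangle\ge 1\}$ and the minimality argument producing the triple $\{\alpha_j,\gamma,\alpha_j+\gamma\}$ are both sound. One small wording issue: in the forward implication you say the value $2$ ``contradicts $\delta_1+\delta_2\in N(w_0^J)$,'' but what it actually contradicts is minusculeness itself (the bound $\langle\lambda,\alpha^\vee\rangle\le 1$); the conclusion is the same.
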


Stembridge's result above in fact holds in greater generality, where $\g$ can be semisimple, and $\lambda$ a sum of fundamental weights. In the case where $\lambda=\varpi_j$ is minuscule, the weight spaces of $V(\lambda)$ are $1$-dimensional and hence the orbit $W\lambda$ is in bijection with the set of coset representatives $W^J=\{v\in W \colon v\le_L w^0_J\} \cong J(H(w^0_J))$, for $J=I\setminus\{j\}$. We can also describe the local shape of the heap of such minuscule elements as follows (this is due to Proctor in the simply laced case \cite{Pro}).

\begin{proposition}[\cite{St01}, Proposition 3.3]\label{prop:DmHeap}
    In the heap of a minuscule element, every closed subinterval between two elements labeled $i$, which have no elements labeled $i$ between them, is isomorphic as a labeled poset to the heap of $s_1\cdots s_{m-2}s_{m-1}s_ms_{m-2}\cdots s_1$ in type $D_m$ for $m\ge 3$.
\end{proposition}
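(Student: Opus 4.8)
The statement is entirely local, so the plan is to isolate the convex subposet $[x,y]=\{z\in H(w):x\preceq z\preceq y\}$, where $H(w)$ is the minuscule heap, $\pi(x)=\pi(y)=i$, and no element of colour $i$ lies strictly between $x$ and $y$ in the chain $\pi^{-1}(i)$. Three elementary observations set things up: $x$ is the unique minimum and $y$ the unique maximum of $[x,y]$, colour $i$ occurs in $[x,y]$ only at $x$ and $y$, and if $c$ covers $x$ inside $[x,y]$ then $c$ already covers $x$ in $H(w)$ (any $x\prec c'\prec c$ would satisfy $x\prec c'\prec y$, hence lie in $[x,y]$), so $\pi(c)$ is adjacent to $i$ in $I$; dually for the elements covered by $y$. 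I would then pin $[x,y]$ down by induction on $\#[x,y]$, fuelled by a single numerical identity. (An alternative route is to show $[x,y]$ is itself the heap of a minuscule element of the parabolic generated by the colours it uses and quote Proctor--Stembridge's classification of minuscule posets, but the inductive argument is more self-contained and produces the labelled isomorphism explicitly.)

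The key step is the count: \emph{between any two consecutive elements of $\pi^{-1}(i)$ there are exactly two elements whose colour is adjacent to $i$}. This uses the defining property of a minuscule (more precisely $\lambda$-minuscule) element: reading a reduced word $s_{i_1}\cdots s_{i_l}$ from right to left, the running weight drops by a single simple root at each step. Hence the element $x$ occurring at position $k$ satisfies $\langle\lambda,\alpha_i^\vee\rangle-2a(x)+b(x)=1$, where $a(x)=\#\{z\prec x:\pi(z)=i\}$ and $b(x)=\#\{z\prec x:\pi(z)\text{ adjacent to }i\}$; here one uses that $I$ is simply laced (so $\langle\alpha_j,\alpha_i^\vee\rangle$ is $2,-1,0$ according as $j=i$, $j$ is adjacent to $i$, or neither) and the bookkeeping fact that $z\prec x$ holds precisely when $z$ occurs after $x$ in the word and $\pi(z)\in\{i\}\cup\{\text{neighbours of }i\}$. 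Writing this identity at $x$ and at $y$ and subtracting makes $\langle\lambda,\alpha_i^\vee\rangle$ cancel; since $a(y)=a(x)+1$ and $b(y)=b(x)+\#\{z:x\prec z\prec y,\ \pi(z)\text{ adjacent to }i\}$, one gets $\#\{z:x\prec z\prec y,\ \pi(z)\text{ adjacent to }i\}=2$.

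Now the induction. Let $J'\subseteq I$ be the connected set of colours occurring in $[x,y]$ and let $t$ be the number of neighbours of $i$ in $J'$; each contributes at least one element strictly between $x$ and $y$, so by the key count $t\in\{1,2\}$. If $t=2$, with distinct neighbours $j,k$, then $[x,y]$ has exactly one element $u$ of colour $j$ and one $v$ of colour $k$; these are exactly the covers of $x$ and exactly the elements covered by $y$, and since $j\not\sim k$ (simply laced diagrams have no triangles) $u$ and $v$ are incomparable, forcing $[x,y]=\{x,u,v,y\}$, the four-element diamond $H(s_is_js_ks_i)$ --- the case $m=3$, identifying the path $j-i-k$ with the $D_3$ diagram. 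If $t=1$, with neighbour $j$, then $[x,y]$ contains exactly two elements $u_1\prec u_2$ of colour $j$; $u_1$ is the unique cover of $x$ and $u_2$ the unique element covered by $y$, whence $[x,y]=\{x\}\sqcup[u_1,u_2]\sqcup\{y\}$ and $u_1,u_2$ are consecutive in $\pi^{-1}(j)$. By induction $[u_1,u_2]\cong H(s_1\cdots s_{m-3}s_{m-2}s_{m-1}s_{m-3}\cdots s_1)$... rather $H(s_1\cdots s_{m'-2}s_{m'-1}s_{m'}s_{m'-2}\cdots s_1)$ in type $D_{m'}$ with colour $j$ at the end of the long arm; re-attaching $x$ below, $y$ above, and the node $i$ to $j$ (extending the long arm by one node) yields $[x,y]\cong H(s_1\cdots s_{m-2}s_{m-1}s_ms_{m-2}\cdots s_1)$ in type $D_m$ with $m=m'+1$. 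The recursion strictly decreases $\#[x,y]$, so it terminates, necessarily in the $t=2$ base case.

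The part that requires care --- and where the minuscule hypothesis is genuinely used --- is the key count: translating ``$\lambda$-minuscule'' into the local identity $\langle\lambda,\alpha_i^\vee\rangle-2a(x)+b(x)=1$ and correctly matching $a(x),b(x)$ with counts of heap elements below $x$. After that the argument is formal: one is only manipulating convex subposets of heaps of fully commutative elements, together with the fact that a simply laced Dynkin diagram is a tree, so that deleting the successive leaves $i,j,\dots$ used in the induction forces $J'$ to be exactly a type $D_m$ diagram (one branch vertex, two short arms, one long arm) and nothing else. I expect the main obstacle to be phrasing this bookkeeping cleanly enough that the cancellations and the ``exactly one cover per neighbouring colour'' assertions are transparent rather than case-ridden.
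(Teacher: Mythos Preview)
The paper does not supply its own proof of this proposition; it is quoted verbatim from Stembridge \cite{St01} and used as a black box. So there is no in-paper argument to compare against.

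Your approach is correct and is, in fact, very close to Stembridge's original proof: the same local count (exactly two neighbours-of-$i$ strictly between consecutive $i$'s, extracted from the $\lambda$-minuscule condition $\langle s_{i_{k+1}}\cdots s_{i_l}\lambda,\alpha_{i_k}^\vee\rangle=1$) followed by an induction peeling off the bottom and top elements. A few places deserve one more sentence of care:
\begin{itemize}
\item Your ``bookkeeping fact that $z\prec x$ holds precisely when $z$ occurs after $x$ in the word and $\pi(z)\in\{i\}\cup\{\text{neighbours of }i\}$'' is not literally a characterization of $\prec$ (transitivity produces many more relations). What you actually need, and what is true, is: for $z$ with $\pi(z)\in\{i\}\cup\{\text{neighbours of }i\}$, comparability with $x$ is automatic and agrees with word order. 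That suffices to identify $a(x),b(x)$ with the word counts.
\item In the $t=2$ case the assertion $[x,y]=\{x,u,v,y\}$ is not quite immediate. The clean justification: any $z\in(x,y)$ sits on a saturated chain from $x$ to $y$; the first step lands in $\{u,v\}$ and the last step leaves from $\{u,v\}$, so $u\preceq z\preceq v$ or $v\preceq z\preceq u$ or $z\in\{u,v\}$; incomparability of $u,v$ then forces $z\in\{u,v\}$.
\item For termination you should note that the key count itself rules out $\#[u_1,u_2]\le 3$ (if $\#[u_1,u_2]=2$ then $u_2$ would cover $u_1$ with the same colour; if $\#[u_1,u_2]=3$ the single interior element cannot account for two neighbours-of-$j$). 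Hence the recursion is guaranteed to reach the $t=2$ diamond before running out of room, and the interval sizes are exactly $2m-2$.
\end{itemize}
With these small clarifications your argument goes through, and the tree property of simply laced Dynkin diagrams (no triangles, removing a leaf keeps connectivity) gives the identification of $J'$ with a $D_m$ diagram as you outline.
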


\begin{proposition}[\cite{Dra}, Proposition 2.18]\label{prop:Coset model}
    The set $W^J$ is a model for the crystal $B(\lambda)$ with weight map given by $\wt(v) = v\lambda$ and the following raising and lowering operators.
    \begin{gather*}
  f_i(v)=
  \begin{cases}
    s_i v &\text{ if } s_i v > v  \text{ and } s_i v \in W^J \\
    0 &\text{ otherwise,}
  \end{cases}
  \qquad
  e_i(v)=
  \begin{cases}
    s_i v &\text{ if } s_i v < v \text{ and } s_i v \in W^J\\
    0 &\text{ otherwise.}
  \end{cases}
\end{gather*}  
\end{proposition}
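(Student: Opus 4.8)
The statement to prove is Proposition~\ref{prop:Coset model}: that $W^J$, equipped with the weight map $\wt(v) = v\lambda$ and the operators $f_i, e_i$ defined via multiplication by $s_i$ on the left (subject to a length-increase and a membership-in-$W^J$ condition), is a realization of the crystal $B(\lambda)$ for $\lambda = \varpi_j$ minuscule.

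My plan is to exploit the well-known Weyl-group model for minuscule weight orbits. First I would recall that since $\lambda = \varpi_j$ is minuscule, all weight spaces of $V(\lambda)$ are one-dimensional and the Weyl group $W$ acts transitively on the weights, so the weight poset $W\lambda$ is in bijection with $W/W_J$, and via the minimal-length coset representatives with $W^J = \{v \in W : v \le_L w_0^J\}$. The weight of the crystal element indexed by $v$ is $v\lambda$, which is exactly the stated weight map. So the content of the proposition is purely about matching the crystal operators. The standard fact I would invoke is that for a minuscule representation, $e_i$ and $f_i$ each move a weight $\mu$ by $\pm\alpha_i$ whenever $\mu \mp \alpha_i$ is again a weight (and kill it otherwise), with no higher $i$-strings — i.e., every $i$-string has length $0$ or $1$. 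Then I would translate ``$v\lambda - \alpha_i$ is a weight'' and ``that weight's coset representative'' into the language of left multiplication by $s_i$.

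The key steps, in order: (1) Establish the bijection $W^J \leftrightarrow W\lambda$, $v \mapsto v\lambda$, and note $\langle v\lambda, \alpha_i^\vee\rangle \in \{-1, 0, 1\}$ for all $i$ because $\lambda$ is minuscule (the orbit lies in a single $W$-orbit of a minuscule coweight pairing). (2) Show that $s_i v \lambda = v\lambda - \langle v\lambda, \alpha_i^\vee\rangle \alpha_i$, so $s_i v\lambda = v\lambda$ exactly when $\langle v\lambda, \alpha_i^\vee\rangle = 0$, and otherwise $s_i v$ is the coset representative of the adjacent weight. (3) Check that ``$s_i v \in W^J$ and $s_i v > v$'' is equivalent to ``$\langle v\lambda, \alpha_i^\vee \rangle = -1$'', i.e., $v\lambda + \alpha_i$ is not a weight but we are moving down the $i$-string — here one uses that $v \in W^J$ means $v$ is the \emph{minimal} length representative, and that $s_i v > v$ together with $s_i v \in W^J$ pins down which of the two cases (weight goes up vs.\ down vs.\ fixed) we are in; the subtle point is that $s_i v$ could have the same length-parity issue or could leave $W^J$, and one must verify $s_iv \in W^J$ is automatic when $s_i v > v$ and $\langle v\lambda,\alpha_i^\vee\rangle \neq 0$. (4) Conclude that $f_i, e_i$ as defined are mutually inverse partial bijections moving weight by $\mp\alpha_i$, that $\varepsilon_i, \phi_i \in \{0,1\}$ and $\phi_i(v) - \varepsilon_i(v) = \langle \wt(v), \alpha_i^\vee\rangle$, which are precisely the crystal axioms for a normal crystal; since $B(\lambda)$ is connected with highest weight $\lambda$ and the $W^J$-model is also connected (as $W^J$ is an interval in weak order) with highest weight element $e$ (where $\wt(e) = \lambda$), uniqueness of the crystal $B(\lambda)$ forces the isomorphism.

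The main obstacle I anticipate is step (3): carefully showing that the combinatorial condition ``$s_i v > v$ and $s_i v \in W^J$'' correctly distinguishes the ``raise'' from the ``lower'' direction and never spuriously vanishes or double-counts. Concretely, if $v \in W^J$ and $s_i v > v$, one needs that $s_i v \in W^J$ unless $v^{-1}\alpha_i$ is a root ``pointing into $W_J$'', which is exactly the case $\langle v\lambda, \alpha_i^\vee\rangle = 0$; and conversely if $s_i v < v$ one needs $s_i v \in W^J$ automatically. This is a standard Coxeter-theoretic fact about minimal coset representatives (the set $W^J$ is closed under left multiplication by $s_i$ whenever that multiplication changes the coset), but writing it cleanly requires invoking the characterization of $W^J$ via $v^{-1}(\Phi_J^+) \subseteq \Phi^+$ and the minuscule condition to rule out the length-$\geq 2$ string behavior. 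Once this is in place, the rest is bookkeeping comparing against Proposition stated above and the definition of a normal crystal, so I would not belabor it.
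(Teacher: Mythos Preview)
The paper does not give its own proof of this proposition; it is quoted verbatim with attribution to \cite{Dra}, Proposition~2.18, and used as background input for Section~\ref{sec:results}. So there is no in-paper argument to compare your sketch against.

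Your outline is the standard Weyl-orbit argument for minuscule crystals and is essentially correct. One sign slip in step~(3): the condition ``$s_i v > v$ and $s_i v \in W^J$'' is equivalent to $\langle v\lambda,\alpha_i^\vee\rangle = +1$, not $-1$. Indeed $s_i v > v$ iff $v^{-1}\alpha_i\in\Phi^+$, so $\langle v\lambda,\alpha_i^\vee\rangle=\langle\lambda,v^{-1}\alpha_i^\vee\rangle\in\{0,1\}$ by dominance and minusculeness; the value is $0$ exactly when $s_i v\lambda=v\lambda$, i.e.\ $s_i v$ lies in the same coset $vW_J$, and being longer it is then \emph{not} in $W^J$. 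Your verbal description (``moving down the $i$-string'') matches $+1$, since then $\wt(f_i v)=v\lambda-\alpha_i$. This also resolves the ``is $s_i v\in W^J$ automatic?'' worry you flagged: when $s_i v<v$ and $v\in W^J$, the pairing lies in $\{-1,0\}$, and $0$ would contradict minimality of $v$ in its coset; hence it equals $-1$, $s_i v$ lies in a different coset, and the standard parabolic dichotomy (either $s_i v\in W^J$ or $s_i v=vs_j$ for some $j\in J$, the latter forcing $s_i v>v$) gives $s_i v\in W^J$. With this correction the remainder of your plan goes through.
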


\begin{example}\label{ex:D4 Heap}
    For $\lambda = \varpi_1$ in type $D_4$, we have $J = \{2,3,4\}$ and $w_0^J = s_1s_2s_3s_4s_2s_1$.
\end{example}

This crystal model can be generalized to $\g$-crystals with highest weight $n \lambda$ where $\lambda$ is minuscule, using $RPP(w,n)$ and the tensor product rule on $J(H(w))^n$:

\begin{thm}[\cite{Dra}, Theorem 2.29]\label{thm:RPP Crystal}
    Let $\lambda =\varpi_j$ and $J=I\setminus\{j\}$. If $\lambda$ is minuscule, then there is an isomorphism of crystals $B(n\lambda) \cong RPP(w_0^J, n)$ compatible with the inclusions $B(n\lambda) \hookrightarrow B(\lambda)^{\otimes n}$ and $RPP(w^J_0,n) \hookrightarrow J(H(w^J_0))^n$.
\end{thm}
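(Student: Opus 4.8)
The plan is to build the crystal isomorphism $B(n\lambda) \cong RPP(w_0^J, n)$ by threading through the two embeddings already at hand: $B(n\lambda)\hookrightarrow B(\lambda)^{\otimes n}$ from the tensor product construction (Example \ref{eg:multmincrys}) and $RPP(w_0^J,n)\hookrightarrow J(H(w_0^J))^n$ from Lemma \ref{lem:Increasing Chain}. Since Proposition \ref{prop:Coset model} identifies $W^J \cong J(H(w_0^J))$ with the crystal $B(\lambda)$, there is a bijection $B(\lambda)^{\otimes n} \cong J(H(w_0^J))^n$ of underlying sets; the first task is to check this intertwines the two crystal structures, i.e. that the combinatorial raising/lowering rule on $J(H(w_0^J))$ coming from $s_i\cdot(-)$ and weak order matches the crystal operators on $B(\lambda)$ under Stembridge's poset isomorphism. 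This is essentially the $n=1$ case and should follow from unwinding Proposition \ref{prop:Coset model}.

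Next I would show the bijection restricts correctly to the sub-objects. On the RPP side, Lemma \ref{lem:Increasing Chain} pins down the image of $RPP(w_0^J,n)$ inside $J(H(w_0^J))^n$ as the chains $\phi_1 \subseteq \cdots \subseteq \phi_n$ of nested order ideals. On the tableau side, $B(n\lambda)$ sits inside $B(\lambda)^{\otimes n}$ as the weakly increasing fillings $b_1 \leq \cdots \leq b_n$ in the poset $\mathbf{N}$. So the core combinatorial claim is: under the bijection $B(\lambda)\cong W^J \cong J(H(w_0^J))$, the order on $\mathbf{N}$ (equivalently, the partial order on the crystal $B(\varpi_1)$ coming from its linear-except-at-the-fork shape) corresponds exactly to inclusion of order ideals. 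Concretely, $b \leq b'$ in $\mathbf{N}$ iff the order ideal attached to $b$ is contained in the one attached to $b'$. Given that, the element $b_1\otimes\cdots\otimes b_n$ lies in $B(n\lambda)$ precisely when the corresponding tuple $(\phi_1,\dots,\phi_n)$ is a chain, so the two subsets match.

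Finally I would verify that the crystal operators are compatible with the two tensor-product rules. Both $B(\lambda)^{\otimes n}$ and $J(H(w_0^J))^n$ carry the same signature/bracketing rule (Proposition of \cite{Tin08}), so once the $n=1$ structures are identified and the underlying bijection is fixed, the operators on the $n$-fold products automatically agree; restricting to $B(n\lambda)$ and $RPP(w_0^J,n)$ then gives the desired crystal isomorphism, and compatibility with the stated inclusions is built in by construction. The weight map also matches since $\wt(v) = v\lambda$ is additive across tensor factors, and each tableau entry $i$ contributes $\epsilon_i$ exactly as its coset representative does.

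I expect the main obstacle to be the $n=1$ identification of crystal structures in the second step: one must match the combinatorial crystal on $J(H(w_0^J))$ (reading off which order ideals differ by adding/removing a single box labeled $i$, subject to staying an order ideal) with the explicit $B(\varpi_1)$ picture in Figure \ref{fig:Standard-crystal}, including the behavior at the fork node where $m$ and $\overline m$ are incomparable. Using Example \ref{ex:D4 Heap} and Proposition \ref{prop:DmHeap} to understand the local shape of $H(w_0^J)$ near label-$i$ beads should make this tractable, but it is the step that requires genuine case-checking rather than formal nonsense; everything after it is bookkeeping about nested chains and the signature rule.
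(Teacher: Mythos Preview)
The paper does not contain a proof of this statement: it is quoted verbatim as Theorem~2.29 of \cite{Dra} and used as a black box, so there is nothing here to compare your proposal against.

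For what it is worth, your outline is broadly the shape of the argument one finds in \cite{Dra}, but note that as written it only treats the case $\lambda=\varpi_1$ in type $D_m$: you lean on the explicit poset $\mathbf{N}$, Figure~\ref{fig:Standard-crystal}, and Example~\ref{eg:multmincrys}, all of which are special to that situation, whereas the theorem is stated for an arbitrary minuscule fundamental weight in any simple type. To handle the general case you would replace the tableau model by the abstract identification $B(\lambda)\cong W^J\cong J(H(w_0^J))$ from Proposition~\ref{prop:Coset model} and Stembridge's theorem, and then argue directly that the crystal partial order on $B(\lambda)$ (generated by the $f_i$) corresponds to inclusion of order ideals, since applying $f_i$ to $v\in W^J$ is exactly adding the unique addable box labeled $i$ to the corresponding ideal. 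With that modification the rest of your plan (matching nested chains with $B(n\lambda)$ via the tensor-product signature rule) goes through type-independently.
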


\section{Groups Acting on Crystals}

There are several groups generated by involutions that act on crystals. In type $A$, the Berenstein--Kirillov group acts on semistandard Young tableaux and Gelfand-Tsetlin patterns, alternative models for crystals \cite{BK95}. We consider the action of the cactus group and toggles in a more general setting. 

\subsection{The cactus group}

To define the cactus group corresponding to the Lie algebra $\g$, we first recall the Dynkin diagram automorphisms $\theta_J:J\rightarrow J$, where $J\subseteq I$ is any a connected subdiagram, determined by $\alpha_{\theta_J(j)}=-w_{0,J}(\alpha_j)$, for $w_{0,J}$ the longest element of the subgroup $W_J$. In type $A_m$ this automorphism is given by reflecting, $\theta_I(i) = m-i+1$, and in type $D_m$, $\theta_I$ is the identity when $m$ is even, and swaps the spin nodes $m-1$ and $m$ when $m$ is odd. 

\begin{definition}
    The \textbf{cactus group} $C_{\g}$ is the group with generators $c_J$ for each connected subdiagram $J\subseteq I$, and relations $c_J^2 = 1$, $c_Jc_K = c_Kc_J$ if $J\cup K$ is disconnected, $c_Jc_K = c_Kc_{\theta_K(J)}$ if $J\subseteq K$, and no relations between $c_J$ and $c_K$ otherwise.
\end{definition}

Given the cactus relations, a smaller set of generators is often sufficient:

\begin{proposition}
    The cactus group $C_{D_m}$ is generated by elements corresponding to subdiagrams of type $D$, $c_{\{j,\dots,m-1,m\}}$ for $1\le j\le m-2$, and elements corresponding to subdiagrams of type $A$, $c_{\{1,\dots, j\}}$ for $1\le j\le m-1$, as well as the additional subdiagram $c_{\{1,\dots, m-2,m\}}$ if $m$ is even.
\end{proposition}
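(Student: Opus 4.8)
The plan is to show that every generator $c_J$ for a connected subdiagram $J \subseteq I$ can be written as a product of the listed generators, using the cactus relations (in particular $c_J c_K = c_K c_{\theta_K(J)}$ for $J \subseteq K$), and then to verify that no proper subset of the proposed list suffices would be a separate (non-)minimality question we do not need. So I would focus purely on the spanning claim. First I would classify the connected subdiagrams of the type $D_m$ Dynkin diagram: each connected subdiagram is either an interval $\{a, a+1, \dots, b\}$ with $b \le m-2$ (type $A$), a set $\{a, \dots, m-2, m-1\}$ or $\{a, \dots, m-2, m\}$ containing exactly one spin node (type $A$), or a set $\{a, \dots, m-1, m\}$ containing both spin nodes (type $D$ if $a \le m-2$, or type $A_1 \times A_1$-ish degenerate cases near the fork which are actually type $D_3 = A_3$ when $a = m-2$). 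This gives a finite, explicitly enumerable list of families, and it suffices to express a representative of each family in terms of the named generators.

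The key reduction is the relation $c_K c_J c_K = c_{\theta_K(J)}$ whenever $J \subseteq K$, equivalently $c_J = c_K c_{\theta_K(J)} c_K$. Taking $K = \{1, \dots, j\}$ of type $A_j$, the automorphism $\theta_K$ is the order-reversing flip $i \mapsto j - i + 1$, so conjugating $c_{\{1,\dots,i\}}$ (a "left-justified" $A$-subdiagram) by $c_K$ produces $c_{\{j-i+1, \dots, j\}}$, a "right-justified" subinterval of $K$. Combining a left-justification generator with a conjugation of this type yields every interval $\{a, \dots, b\}$ with $b \le m-1$: indeed $c_{\{a,\dots,b\}}$ is obtained from $c_{\{1,\dots, b-a+1\}}$ by conjugating with $c_{\{1,\dots,b\}}$. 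Similarly, taking $K = \{j, \dots, m-1, m\}$ of type $D_{m-j+1}$, the automorphism $\theta_K$ fixes all of $\{j, \dots, m-2\}$ and either fixes or swaps the two spin nodes depending on the parity of $m - j + 1$; conjugating the $A$-type subdiagrams $c_{\{j, \dots, i\}}$ (for $i \le m-2$) and the spin-containing $A$-type subdiagram by $c_K$ lets me reach the subdiagrams $\{a, \dots, m-2, m-1\}$ and $\{a, \dots, m-2, m\}$ for every $a$, as well as recover $c_{\{1,\dots,m-2,m\}}$ when $m$ is odd (which is why that extra generator is only needed when $m$ is even — when $m$ is odd it is conjugate to $c_{\{1,\dots,m-1\}}$ via $c_I$ or via a $D$-type generator). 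It remains to handle the $D$-type subdiagrams $\{a, \dots, m-1, m\}$ with $a \ge 2$: these are obtained from $c_{\{1,\dots,m-1,m\}} = c_I$ by conjugating with the $A$-type generator $c_{\{1, \dots, m-1\}}$? No — more carefully, $\{a, \dots, m-1, m\} \subseteq I$ and $\theta_I$ acts on it, so I instead build these by observing $\{a,\dots,m-1,m\}$ is already on the list for all $1 \le a \le m-2$, so nothing is needed there; the content is entirely in generating the $A$-type subdiagrams that are not left-justified.

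Concretely, the proof is an induction organized by the "width" of subdiagrams: using $c_{\{1,\dots,j\}}$ and the conjugation relation one first generates all $A$-type intervals inside $\{1,\dots,m-1\}$, then using the $D$-type generators $c_{\{j,\dots,m-1,m\}}$ and conjugation one generates all connected subdiagrams meeting the spin nodes, and finally one checks the parity statement for $c_{\{1,\dots,m-2,m\}}$: when $m$ is odd, $\theta_{\{1,\dots,m-1,m\}} = \theta_I$ restricted appropriately (or $\theta_{D\text{-generator}}$) swaps $m-1 \leftrightarrow m$, so $c_{\{1,\dots,m-2,m\}} = c_K c_{\{1,\dots,m-2,m-1\}} c_K$ for a suitable $D$-type $K$; when $m$ is even this automorphism is trivial on the spin nodes and this conjugation trick fails, forcing inclusion of $c_{\{1,\dots,m-2,m\}}$ as a genuine extra generator. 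I expect the main obstacle to be bookkeeping: being careful about exactly which $\theta_K$ does to the spin nodes for each relevant $K$ (this depends on the parity of $|K|$, i.e. of $m-j+1$), and making sure the conjugations are chained so that at each stage one only uses generators already shown to lie in the subgroup generated by the named list. There is no deep idea beyond the cactus relations; the work is in the careful case analysis of subdiagram shapes and automorphism actions.
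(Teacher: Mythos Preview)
Your approach is correct and is the natural one: the conjugation identity $c_J = c_K\, c_{\theta_K(J)}\, c_K$ (for $J \subseteq K$) lets you move left-justified $A$-intervals around to reach every $A$-interval in $\{1,\dots,m-1\}$, and then conjugation by a $D$-type generator $K' = \{a',\dots,m-1,m\}$ whose $\theta_{K'}$ swaps the spin nodes converts $\{a,\dots,m-2,m-1\}$ into $\{a,\dots,m-2,m\}$. The parity obstruction at $a = 1$ (no $a' \le 1$ with $m - a' + 1$ odd when $m$ is even) is exactly why the extra generator is needed for $m$ even. Two small cleanups: the remark about ``$A_1 \times A_1$-ish degenerate cases near the fork'' can be dropped, since $\{m-1,m\}$ is disconnected and so does not index a generator at all; and for $\{a,\dots,m-2,m\}$ with $a \ge 2$ you should say explicitly that one of $a' = a$ or $a' = a-1$ has the parity of $m$, so a suitable $K'$ with $\theta_{K'}$ nontrivial always exists.

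As for comparison: the paper states this proposition without proof, treating it as a routine consequence of the cactus relations (the preceding sentence is ``Given the cactus relations, a smaller set of generators is often sufficient''). Your argument supplies precisely the missing justification.
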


The name \textit{cactus} comes from identifying $C_{A_m}$ with the fundamental group of the moduli space of real genus $0$ stable curves with marked points \cite{HeKa}. The (set) action of this group on any $\g$-crystal is realized via generalizations of the classical Sch\"utzenberger involution in algebraic combinatorics. 

\begin{definition}
    For a highest weight irreducible $\g$-crystal $B(\lambda)$, the \textbf{Sch{\"u}tzenberger involution} is the unique map of sets $\xi\colon B(\lambda)\to B(\lambda)$ which satisfies the following conditions: $$ \wt(\xi(b)) = w_0 \wt(b), \quad  e_i\cdot \xi(b) = \xi(f_{\theta_I(i)}(b)), \quad  f_i\cdot \xi(b) = \xi(e_{\theta_I(i)}(b)).$$ It can be extended to any $\g$-crystal $B=\bigsqcup_\lambda B(\lambda)$ by acting as above on each of the components.
\end{definition}

The effect of applying $\xi$ is that the highest and lowest weights of $B(\lambda)$ are swapped.

\begin{thm}[\cite{HKRW}, Theorem 5.19]
    There is an action of the cactus group $C_{\g}$ on any $\g$-crystal $B$ given by, for any connected $J\subseteq I$,
   $c_J(b) = \xi_{B_J}(b)$, 
    where $B_J$ is the restriction of $B$ to the subdiagram $J$, which consists of the same vertices and only edges labeled by $j\in J$.
\end{thm}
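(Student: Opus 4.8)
The plan is to show that the assignment $c_J \mapsto \xi_{B_J}$ respects the defining relations of $C_{\g}$, so that it extends to a group homomorphism from $C_{\g}$ to the group of permutations of $B$, which is precisely an action. Since $C_{\g}$ is \emph{presented} by the generators $c_J$ together with the listed families of relations (the clause ``no relations otherwise'' imposing nothing further), and since $\xi$, the crystal operators, and all these relations are defined componentwise while restriction to a subdiagram only refines the connected components, we may reduce to $B = B(\lambda)$ irreducible and it then suffices to verify, in the permutation group of $B$, the identities $\xi_{B_J}^2 = \mathrm{id}$; $\xi_{B_J}\xi_{B_K} = \xi_{B_K}\xi_{B_J}$ when $J\cup K$ is disconnected; and $\xi_{B_K}\xi_{B_J}\xi_{B_K} = \xi_{B_{\theta_K(J)}}$ when $J\subseteq K$ (the last being equivalent, via $\xi_{B_K}^2=\mathrm{id}$, to $c_Jc_K = c_Kc_{\theta_K(J)}$).

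First one checks that $\xi_{B_J}$ is a well-defined involution. For connected $J$ the restriction $B_J$ is again a normal $W_J$-crystal, hence decomposes into irreducible highest-weight $W_J$-components, and $\xi_{B_J}$ is the Sch\"utzenberger involution on each, built from $w_{0,J}$ and $\theta_J$ in place of $w_0$ and $\theta_I$; existence and uniqueness on an irreducible highest-weight crystal are classical, a weight-reversing operator-intertwining bijection being forced on the highest weight line and then propagated everywhere by the lowering operators. For involutivity, $w_{0,J}^2=1$ and $\theta_J^2=\mathrm{id}$ show that $\xi_{B_J}^2$ preserves weights and commutes with every $e_j,f_j$ ($j\in J$); since $\xi_{B_J}$ exchanges the highest and lowest weight elements of each component, $\xi_{B_J}^2$ fixes the highest weight element, and a crystal automorphism of an irreducible component fixing its highest weight element is the identity. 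Hence $\xi_{B_J}^2=\mathrm{id}$, giving $c_J^2=1$.

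Next, suppose $J\cup K$ is disconnected, so no node of $J$ is adjacent to a node of $K$ and the operators $\{e_j,f_j:j\in J\}$ commute with $\{e_k,f_k:k\in K\}$. Then every connected component of $B_{J\cup K}$ factors as a product $C\times D$ of a connected $J$-crystal $C$ with a connected $K$-crystal $D$, the two families of operators acting on separate factors. On such a component the $J$-components are the slices $C\times\{d\}$ and the $K$-components the slices $\{c\}\times D$, so $\xi_{B_J}$ acts as $\xi_C\times\mathrm{id}$ and $\xi_{B_K}$ as $\mathrm{id}\times\xi_D$; these commute, giving $c_Jc_K=c_Kc_J$.

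Finally let $J\subseteq K$. As a map of $K$-crystals, $\xi_{B_K}$ preserves each $K$-component and satisfies there $\wt(\xi_{B_K}(b)) = w_{0,K}\wt(b)$, $\xi_{B_K}e_i\xi_{B_K}^{-1} = f_{\theta_K(i)}$ and $\xi_{B_K}f_i\xi_{B_K}^{-1} = e_{\theta_K(i)}$ for $i\in K$. Thus conjugation by $\xi_{B_K}$ carries the $J$-crystal structure to the $\theta_K(J)$-crystal structure (interchanging raising and lowering operators and sending $\theta_K(J)$-components to $J$-components and back); a short computation with these relations, together with the Weyl-group identity $w_{0,K}w_{0,J}w_{0,K} = w_{0,\theta_K(J)}$, equivalently $\theta_K\theta_J\theta_K = \theta_{\theta_K(J)}$, shows that $\xi_{B_K}\xi_{B_J}\xi_{B_K}$ satisfies exactly the characterizing properties of the Sch\"utzenberger involution of $B_{\theta_K(J)}$. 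By the uniqueness established above it equals $\xi_{B_{\theta_K(J)}}$, which is the relation $c_Kc_Jc_K = c_{\theta_K(J)}$, i.e. $c_Jc_K = c_Kc_{\theta_K(J)}$. This last relation is the main obstacle: one must check carefully that conjugation by $\xi_{B_K}$ produces precisely the involution attached to $\theta_K(J)$ — in particular that the internal Dynkin automorphism $\theta_J$ transforms into $\theta_{\theta_K(J)}$ via the above Weyl-group identity — and then invoke uniqueness; the other two relations are comparatively routine.
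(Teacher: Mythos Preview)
The paper does not contain a proof of this statement: it is quoted as Theorem~5.19 of \cite{HKRW} and used as a black box, so there is nothing to compare your argument against here. That said, your sketch is the standard one and is essentially correct; the three verifications (involutivity, commutation for disconnected $J\cup K$, and the conjugation identity $\xi_{B_K}\xi_{B_J}\xi_{B_K}=\xi_{B_{\theta_K(J)}}$ for $J\subseteq K$ via $w_{0,K}w_{0,J}w_{0,K}=w_{0,\theta_K(J)}$ together with uniqueness of the Sch\"utzenberger involution) are exactly what is done in the cited reference.
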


\subsection{Toggles}

Another group that acts on crystals is the group of so-called combinatorial toggles which are defined for order ideals of a poset and can be extended to reverse plane partitions.

\begin{definition}
    Let $(\mathcal{P}, \preceq)$ be a poset. Then for any $x\in \mathcal{P}$, define the associated \textbf{combinatorial toggle} $t_x\colon J(\mathcal{P}) \to  J(\mathcal{P})$ by
    \begin{align*}
        t_x(S) &= \begin{cases}
            S \cup \{x\} & \text{ if } x\notin S \text{ and } S\cup \{x\}\in J(\mathcal{P})\\
            S\backslash \{x\} & \text{ if } x\in S \text{ and } S\backslash \{x\}\in J(\mathcal{P})\\
            S & \text{ otherwise}
        \end{cases}.
    \end{align*}
\end{definition}
In other words, the toggle associated to $x$ either adds or removes $x$ depending on whether the result is a valid order ideal. In the case where $\mathcal{P} = H(w)$ for some fully commutative $w\in W$, we have that the toggles $t_x$ and $t_y$ commute whenever $\pi(x) = \pi(y)$, since each fiber $\pi^{-1}(i)$ is totally ordered. Hence   
$t_i = \prod_{x\in \pi^{-1}(i)} t_x$
is a well defined function $J(H(w)) \to J(H(w))$. In the bead analogy, these elements correspond to either adding or removing a bead on the $i$-th runner whenever legal. The \textbf{toggle group} $Tog(w)$ is the group generated by $t_i$ for each $i\in I$.

The action of the toggle group can be expended to reverse plane partitions as follows: 1) Write $\Phi \in RPP(w,n)$ as an increasing chain of order ideals $(\phi_1, \dots, \phi_n)$. 2) Apply $t_i$ to each component to get $(t_i(\phi_1), \dots, t_i(\phi_n))$. 3) Write each $t_i(\phi_j)$ as an element of $RPP(w,1)$. 4) Sum the corresponding elements to get an element of $RPP(w,n)$.

\begin{example}
    Let $\Phi$ be the same RPP considered in Example \ref{ex:RPP}. Writing $\Phi$ as an increasing chain of order ideals, applying the toggle $t_3$, and summing gives
    \[\small
    t_3\left (\arraycolsep=3pt\def\arraystretch{0.9}\begin{array}{ccccccc}
	& & 0 & & \\
	& 0 & & 0 & \\
	1 & & 0 & & \\
	& 1 & & & 
	\end{array}, \arraycolsep=3pt\def\arraystretch{0.9}\begin{array}{ccccccc}
	& & 0 & & \\
	& 0 & & 0 & \\
	1 & & 1 & & \\
	& 1 & & & 
	\end{array}, \arraycolsep=3pt\def\arraystretch{0.9}\begin{array}{ccccccc}
	& & 0 & & \\
	& 1 & & 1 & \\
	1 & & 1 & & \\
	& 1 & & & 
	\end{array}			
    \right ) = \left (\arraycolsep=3pt\def\arraystretch{0.9}\begin{array}{ccccccc}
	& & 0 & & \\
	& 0 & & 0 & \\
	1 & & 1 & & \\
	& 1 & & & 
	\end{array}, \arraycolsep=3pt\def\arraystretch{0.9}\begin{array}{ccccccc}
	& & 0 & & \\
	& 0 & & 0 & \\
	1 & & 0 & & \\
	& 1 & & & 
	\end{array}, \arraycolsep=3pt\def\arraystretch{0.9}\begin{array}{ccccccc}
	& & 1 & & \\
	& 1 & & 1 & \\
	1 & & 1 & & \\
	& 1 & & & 
	\end{array}			
    \right ) = \arraycolsep=3pt\def\arraystretch{0.9}\begin{array}{ccccccc}
	& & 1 & & \\
	& 1 & & 1 & \\
	3 & & 2 & & \\
	& 3 & & & 
	\end{array}	
    \]
\end{example}

Via the isomorphism $B(n\lambda) \cong RPP(w_0^J, n)$ in Theorem \ref{thm:RPP Crystal} we can consider how toggles interact with the weight of an RPP.

\begin{proposition}[\cite{Dra}, Lemma 2.40]\label{prop:RPP weight}
    For $w$ and $\lambda$ as above, if $\Phi\in RPP(w,n)$ and $t_i$ is a generator of $Tog(w)$ then $\wt(t_i\Phi) = s_i\wt(\Phi)$.
\end{proposition}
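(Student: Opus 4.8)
The plan is to reduce to the case $n=1$ and there match the toggle $t_i$ with the crystal operators on $B(\lambda)$ using the explicit $W^J$-model.

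\emph{Reduction to $n=1$.} Write $\Phi$ as its increasing chain of order ideals $(\phi_1,\dots,\phi_n)$ via Lemma~\ref{lem:Increasing Chain}. Under the isomorphism $B(n\lambda)\cong RPP(w,n)$ of Theorem~\ref{thm:RPP Crystal}, compatible with the inclusions into $B(\lambda)^{\otimes n}$ and $J(H(w))^n$, and using that weight is additive on tensor products, one gets $\wt(\Phi)=\sum_{j=1}^n\wt(\phi_j)$, where each $\phi_j$ is regarded as an element of $RPP(w,1)=J(H(w))\cong B(\lambda)$. By the definition of the toggle action on $RPP(w,n)$ (apply $t_i$ componentwise and re-sum), $\wt(t_i\Phi)=\sum_{j=1}^n\wt(t_i\phi_j)$. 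Since $s_i$ is linear on $P$, the identity $\wt(t_i\Phi)=s_i\wt(\Phi)$ follows from the case $n=1$, namely $\wt(t_i\phi)=s_i\wt(\phi)$ for every $\phi\in J(H(w))$.

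\emph{A weight formula and a counting reformulation.} For $S\in J(H(w))=W^J=B(\lambda)$ corresponding to $v\in W^J$, Proposition~\ref{prop:Coset model} gives $\wt(S)=v\lambda$. Climbing any saturated chain in $J(H(w))$ from $\varnothing$ to $S$, each step adds a single bead $x$ (unique, since $\pi^{-1}(\pi(x))$ is a chain), and the two ideals correspond to elements of $W^J$ differing by left multiplication by $s_{\pi(x)}$, i.e.\ by one application of $f_{\pi(x)}$, which changes the weight by $-\alpha_{\pi(x)}$. Hence $\wt(S)=\lambda-\sum_{x\in S}\alpha_{\pi(x)}$. Now $t_i$ alters $S$ only inside the fiber $\pi^{-1}(i)$, so $\wt(t_iS)=\wt(S)-\delta\,\alpha_i$ where $\delta:=\#(t_iS\cap\pi^{-1}(i))-\#(S\cap\pi^{-1}(i))$, while $s_i\wt(S)=\wt(S)-\langle\wt(S),\alpha_i^\vee\rangle\,\alpha_i$. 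It therefore suffices to prove $\delta=\langle\wt(S),\alpha_i^\vee\rangle$.

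\emph{Identifying $\delta$.} Since $\lambda$ is minuscule, every weight of $B(\lambda)$ pairs with $\alpha_i^\vee$ to an element of $\{-1,0,1\}$, so every $i$-string has length at most one and $\langle\wt(v),\alpha_i^\vee\rangle=\phi_i(v)-\varepsilon_i(v)$. Writing $\pi^{-1}(i)=\{x_1\prec\cdots\prec x_k\}$ and $S\cap\pi^{-1}(i)=\{x_1,\dots,x_\ell\}$, the claims are: (a) the toggles $t_{x_j}$ with $j<\ell$ or $j>\ell+1$ act trivially (and remain inert after the single active toggle fires), so $t_i$ either adds $x_{\ell+1}$, removes $x_\ell$, or does nothing; (b) adding $x_{\ell+1}$ is legal exactly when $f_i(v)\neq 0$ and removing $x_\ell$ exactly when $e_i(v)\neq 0$, and these cannot both occur. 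Granting (a)--(b), $\delta=+1,-1,0$ respectively, matching $\phi_i(v)-\varepsilon_i(v)=\langle\wt(v),\alpha_i^\vee\rangle$ in all three cases, which finishes the proof.

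\emph{Main obstacle.} The heart of the argument is establishing (a) and (b). Both rely on the local structure of minuscule heaps (Proposition~\ref{prop:DmHeap}): between any two consecutive beads on runner $i$ there lies a bead on an adjacent runner, which is forced into $S$ once the upper $i$-bead is; this is precisely what makes $t_i$ well defined on the fiber and what bounds $|\delta|\le 1$. The equivalences in (b) then come from translating the conditions ``$s_iv\gtrless v$ and $s_iv\in W^J$'' of Proposition~\ref{prop:Coset model} through the poset isomorphism $W^J\cong J(H(w))$, noting that adding or removing the extremal fiber-$i$ bead changes the Coxeter length by one. I expect (a)--(b) to be the only delicate points; the rest is bookkeeping with the weight formula and linearity of $s_i$.
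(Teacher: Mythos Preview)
The paper does not give its own proof of this proposition: it is quoted verbatim as \cite[Lemma~2.40]{Dra} and used as a black box, so there is nothing in this paper to compare your argument against line by line.

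That said, your outline is sound and is essentially the natural proof. The reduction to $n=1$ is correct: even though $(t_i\phi_1,\dots,t_i\phi_n)$ need not be an increasing chain, the weight formula $\wt(\Psi)=n\lambda-\sum_{x}\Psi(x)\,\alpha_{\pi(x)}$ depends only on the pointwise sum, so $\wt(t_i\Phi)=\sum_j\wt(t_i\phi_j)$ regardless. Your weight formula $\wt(S)=\lambda-\sum_{x\in S}\alpha_{\pi(x)}$ and the reduction to showing $\delta=\langle\wt(S),\alpha_i^\vee\rangle\in\{-1,0,1\}$ are exactly right. For the ``main obstacle'' you flag: claim~(a) follows immediately once you use Proposition~\ref{prop:DmHeap} to produce an element $z$ strictly between $x_\ell$ and $x_{\ell+1}$; then $z\in S$ forces $t_{x_\ell}$ inert while $z\notin S$ forces $t_{x_{\ell+1}}$ inert, and the same $z$ prevents any cascade to $x_{\ell\pm 1}'$ after the single active toggle fires. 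Claim~(b) is just the translation of Proposition~\ref{prop:Coset model} through Stembridge's bijection $W^J\cong J(H(w_0^J))$: $e_i(v)\neq 0$ iff $S$ has a maximal element on runner $i$ (i.e.\ $x_\ell$ is removable), and $f_i(v)\neq 0$ iff $S\cup\{x_{\ell+1}\}$ is again an order ideal of $H(w_0^J)$. So the parts you label ``delicate'' are in fact short once stated this way; the rest is, as you say, bookkeeping.
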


Specializing to the crystal $B(n\varpi_1)$ in type $D_m$, it can be realized as the set $RPP(w,n)$ for $w=s_1\cdots s_{m-2}s_{m-1}s_ms_{m-2}\cdots s_1$. As described in Example \ref{eg:multmincrys}, an alternative model is through generalized SSYT. Using the bijection between the two models, the action of the toggles on tableaux has the following description, analogous to the type A setting for Bender--Knuth involutions. For $1\le i\le m-2$, the toggle $t_i$ swaps the number of $i$ fillings and $i+1$ fillings and swaps the number of $\overline{i}$ fillings and $\overline{i+1}$ fillings. The toggle $t_{m-1}$ swaps the fillings $m-1$ and $m$ and swaps $\overline{m-1}$ and $\overline{m}$ and then replaces pairs of $m$ and $\overline{m}$ with pairs of $m-1$ and $\overline{m-1}$. Similarly $t_m$ interchanges $m-1$ with $\overline{m}$ and $m$ with $\overline{m-1}$ and replaces pairs.

\section{Results}\label{sec:results}

In this section we prove Conjecture \ref{cong:cactus toggle} for the Lie algebra $\g$ of type $D_m$ for the case $B(n\varpi_1)$. For $c\in C_{\g}$ and $t\in Tog(w)$, we denote $c \sim t$ to mean $c$ acts the same as $t$ under the image of the map $C_{\g} \to \prod_n S_{B(n\lambda)}$, where $S_{B(n\lambda)}$ is the symmetric group on the elements of ${B(n\lambda)}$. For integers $1\le k\le m$, define the toggles $r_k$ inductively as follows:
$$r_m = t_m;  \quad r_{m-1} = t_{m-1}; \quad r_k = r_{k+1}t_kr_{k+1}t_kr_{k+1} \text{ for } 1\le k\le m-2.$$

\begin{proposition}\label{prop:Single node}
    The single node cactus generator $c_k$ acts the same as the toggle $r_k$ on the crystal $B(n\varpi_1)$ in type $D_m$, i.e. $c_k\sim r_k$ for all $k=1,\hdots, m$.
\end{proposition}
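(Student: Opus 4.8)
The plan is to prove $c_k \sim r_k$ by downward induction on $k$, from $k=m$ (and $k=m-1$) down to $k=1$, using the crystal structure to pin down each involution. For the base cases $k=m$ and $k=m-1$: the single-node cactus generator $c_k$ is the Schützenberger involution $\xi_{B_{\{k\}}}$ for the rank-one restriction of $B(n\varpi_1)$ to the node $k$, which on each $sl_2$-string of color $k$ simply reverses that string (swapping $e_k^a$ and $f_k^{\phi_k - a}$). On the SSYT/tableau model this means: within each maximal $k$-string, swap the multiplicities of the two fillings differing by the $k$-edge. By the tableau description of toggles recalled just before this section, $t_m$ (resp. $t_{m-1}$) does exactly this for the spin nodes — interchanging $m-1 \leftrightarrow \overline{m}$, $m \leftrightarrow \overline{m-1}$ (resp. $m-1 \leftrightarrow m$, $\overline{m-1} \leftrightarrow \overline{m}$) and handling the $m,\overline m$ pair replacements, which is precisely the string-reversal on color $m$ (resp. $m-1$). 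Hence $c_m \sim t_m = r_m$ and $c_{m-1} \sim t_{m-1} = r_{m-1}$.

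For the inductive step, fix $1 \le k \le m-2$ and suppose $c_{k+1} \sim r_{k+1}$. The key is the cactus relation $c_J c_K = c_K c_{\theta_K(J)}$ applied with $K = \{k, k+1, \dots, m\}$ and $J = \{k+1, \dots, m\} \subset K$. Inside the type-$D$ subdiagram $K$, the diagram automorphism $\theta_K$ fixes $J$ as a set (and indeed acts on $J$ as $\theta_J$), so one extracts from the cactus presentation a relation expressing $c_{\{k\}}$ — or rather the relevant conjugate — in terms of $c_{k+1}$, $c_{\{k,\dots,m\}}$, and lower generators. The cleaner route, which I would actually pursue, is to observe that the element $r_k = r_{k+1} t_k r_{k+1} t_k r_{k+1}$ has exactly the braid-like shape of the well-known formula expressing the long-element Schützenberger involution of a rank-2 parabolic in terms of the rank-1 ones; in the Berenstein–Kirillov / cactus language this is the identity $c_{\{k\}} = c_{\{k,k+1\}} c_{\{k+1\}} c_{\{k,k+1\}} \cdots$ unwound recursively. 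So I would instead prove directly that $r_{k+1} t_k r_{k+1} t_k r_{k+1}$ is the Schützenberger involution $\xi$ for the restriction $B_{\{k\}}$, by verifying the three defining properties of $\xi$ from the definition: (i) it is an involution; (ii) $\mathrm{wt}(r_k \Phi) = s_k \mathrm{wt}(\Phi)$; (iii) it intertwines $e_k$ and $f_k$ appropriately (with $\theta_{\{k\}} = \mathrm{id}$, so $e_k \cdot r_k = r_k \cdot f_k$ and $f_k \cdot r_k = r_k \cdot e_k$), while commuting suitably with the other colors so that it agrees with $c_k$ componentwise on the restricted crystal.

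Property (ii) is immediate from Proposition~\ref{prop:RPP weight}: each $t_i$ acts by $s_i$ on weights, $r_{k+1}$ acts by $w_{0,\{k+1,\dots,m\}}$ by the inductive hypothesis together with the weight property of $\xi$, and the alternating word $r_{k+1} t_k r_{k+1} t_k r_{k+1}$ multiplies out to $w_{0,\{k,\dots,m\}} \cdot w_{0,\{k+1,\dots,m\}}$ applied to the weight — which, since $\{k,\dots,m\}$ is a type-$D$ parabolic with the standard structure from Proposition~\ref{prop:DmHeap}, equals $s_k$ on the $\varpi_1$-weight lattice. The main obstacle, and where I expect to spend the most effort, is property (iii): controlling how $t_k$ interacts with $r_{k+1}$ on $k$- and $(k{+}1)$-strings. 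Here I would work in the RPP/heap model, using Proposition~\ref{prop:DmHeap} to reduce to the local picture of the heap of $s_1 \cdots s_{m-2} s_{m-1} s_m s_{m-2} \cdots s_1$, where the interaction of toggles $t_k$ and the "tail" $t_{k+1} \cdots$ is governed by a small, explicit rank-two (or rank-three near the fork) computation; the fork at the spin nodes is the one genuinely new feature compared to type $A$, and the inductive hypothesis $c_{k+1} \sim r_{k+1}$ is exactly what lets me treat the entire tail past $k$ as a single known involution rather than re-deriving it. Once (i)–(iii) are checked and one notes that $r_k$ is built from toggles $t_k, \dots, t_m$ and hence acts trivially in the directions $1, \dots, k-1$ (so it genuinely computes the restricted Schützenberger involution and not something larger), uniqueness of $\xi$ forces $r_k \sim c_k$, completing the induction.
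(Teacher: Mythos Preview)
Your downward induction on $k$ and the identification $r_k = r_{k+1}t_k r_{k+1}t_k r_{k+1}$ as the candidate for $c_k$ match the paper exactly; the divergence is in how the inductive step is carried out.

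There is one concrete slip. By the inductive hypothesis $r_{k+1}\sim c_{k+1}$, and $c_{k+1}$ is the \emph{single-node} cactus generator, so $r_{k+1}$ acts on weights by $s_{k+1}$, not by $w_{0,\{k+1,\dots,m\}}$ as you write. Your conclusion that $r_k$ acts by $s_k$ is still correct, since $s_{k+1}s_ks_{k+1}s_ks_{k+1}=s_k$ via the braid relation, but the reasoning you gave for (ii) is wrong.

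More substantively, the characterization-of-$\xi$ route does not avoid the computation, and your uniqueness argument has a gap. Property (iii), the intertwining $e_k r_k = r_k f_k$, is the entire content of the proposition; you defer it to an unspecified ``small explicit rank-two computation.'' And properties (i)--(iii) alone do not force $r_k=c_k$: you must also know that $r_k$ preserves each $k$-string, equivalently that $r_k$ fixes every filling other than $k,k{+}1,\overline{k{+}1},\overline{k}$. You observe that $r_k$ acts trivially in colors $1,\dots,k-1$, but that is not the issue: unrolling the recursion, $r_k$ is a word in $t_k,t_{k+1},\dots,t_{m-1}$, so it a priori moves all fillings from $k$ through $m$. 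That it in fact fixes the fillings $k{+}2,\dots,m$ and their bars is not formal and is exactly what needs checking.

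The paper sidesteps these difficulties by comparing the two operators directly. Encoding $v$ by the multiplicity tuple $(a,b,c,\bar c,\bar b,\bar a)$ of the fillings $k,k{+}1,k{+}2,\overline{k{+}2},\overline{k{+}1},\overline{k}$, it first computes $c_k v$ in four cases (according to the signs of $a-\bar a$ and $b-\bar b$) by locating $v$ on its $k$-string and reversing, and then computes $c_{k+1}t_kc_{k+1}t_kc_{k+1}v$ in eight cases (adding the sign of $c-\bar c$), reusing those four formulas for $c_{k+1}$. The two answers agree in every case. The paper's base case is likewise sharper than a bald assertion of ``string reversal'': it notes that $(\wt(v),N(v))$ with $N(v)$ the total count of $m{-}1,m,\overline m,\overline{m{-}1}$ determines those four multiplicities (using that $m$ and $\overline m$ never coexist), and that both $c_{m-1},c_m$ and $t_{m-1},t_m$ preserve $N(v)$ while acting by the correct simple reflection on $\wt$.
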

\begin{proof}
    We proceed by induction, with the base cases being $k = m$ and $k = m-1$, which follow from weight considerations. Observe that since an element $v \in B(n\varpi_1)$ cannot contain both $m$ and $\overline{m}$ in its filling, the $\epsilon_m$ component of $\wt(v)$ determines the number of $m$ or $\overline{m}$ fillings. Hence specifying the weight $\wt(v)$ and the number $N(v)=(\# (m-1)'s +\# \overline{m-1}'s + \# m's +\# \overline{m}'s \text{ in the filling of $v$})$ uniquely determines the number of each filling. The toggles $t_{m}$ and $t_{m-1}$ both preserve the total number of $m-1$, $\overline{m-1}$, $m$, and $\overline{m}$ in the filling of $v$, and likewise for $c_{m}$ and $c_{m-1}$. Since $\wt(t_kv) = s_k\wt(v) = \wt(c_kv)$, we have that $c_{m-1}v = t_{m-1}v$ and $c_mv = t_mv$ as desired.

    For the inductive step, suppose $1\le k\le m-2$. Let $v\in B(n\varpi_1)$ be an arbitrary crystal element. We will compute $c_kv$ and $c_{k+1}t_kc_{k+1}t_kc_{k+1}v$ directly. Representing $v$ as a semistandard tableau with entries $1,\dots, m, \overline{m}, \dots, \overline{1}$, we only need to consider the boxes with entries $k, k+1, k+2, \overline{k+2}, \overline{k+1}, \overline{k}$ as these are the only entries effected by the crystal operators $f_k$ and $f_{k+1}$ and the toggle $t_k$. Suppose $v$ has $a$ many $k$'s (resp. $\bar{a}$ many $\overline{k}$'s), $b$ many $k+1$'s (resp. $\bar{b}$ many $\overline{k+1}$'s), and $c$ many $k+2$'s (resp. $\bar{c}$ many $\overline{k+2}$'s), recorded by the tuple $(a,b,c,\bar{c},\bar{b},\bar{a})$.  We will first compute $c_kv$. Recall that the cactus generator $c_k$ acts by inverting the $(\mathfrak{sl}_2)_k$ chain containing $v$. Let $v^{high}$ and $v^{low}$ denote the highest and lowest weight elements that chain. In each case, let $\bar{\bar{x}} = |x - \bar{x}|$ for $x = a,b,c$. For convenience, we omit $c$ and $\bar{c}$ in the tuple since they are unchanged by $c_k$.  

   \noindent \textbf{Case 1:} $a\le \bar{a}$ and $b\le \bar{b}$. Using the tensor product rule, we can see that $v^{high} = (a,b,\bar{a}+\bar{b},0)$ and $v^{low} = (0,a+b, b, \bar{\bar{b}} + \bar{a})$. Since $v = f_k^{\bar{a}}v^{high}$, we have that $c_kv = e_k^{\bar{a}}v^{low} = (a,b,\bar{\bar{a}} + b, a + \bar{\bar{b}})$.

    \noindent\textbf{Case 2:} $a\le \bar{a}$ and $b > \bar{b}$. We proceed in the same way as in Case 1. We have that $v^{high} = (a + \bar{\bar{b}}, \bar{b}, \bar{a} + \bar{b}, 0)$, $v^{low} = (0, a+b, \bar{b}, \bar{a})$, $v = e_k^a v^{low}$, and therefore $c_kv = f_k^a v^{high} = (a + \bar{\bar{b}}, \bar{b}, \bar{\bar{a}} + \bar{b}, a)$.

    \noindent\textbf{Case 3:} $a > \bar{a}$ and $b\le \bar{b}$. We have that $v^{high} = (a, b, \bar{a} + \bar{b}, 0)$, $v^{low} = (0, a+b, b, \bar{a} + \bar{\bar{b}})$, $v = f_k^{\bar{a}} v^{high}$, and therefore we get $c_kv = e_k^{\bar{a}} v^{low} = (\bar{a}, \bar{\bar{a}} + b, b, \bar{a} + \bar{\bar{b}})$.

    \noindent\textbf{Case 4:} $a > \bar{a}$ and $b > \bar{b}$. We have that $v^{high} = (a + \bar{\bar{b}}, \bar{b}, \bar{a} + \bar{b}, 0)$, $v^{low} = (0, a+b, \bar{b}, \bar{a})$, $v = e_k^a v^{low}$, and therefore we get $c_kv = f_k^a v^{high} = (\bar{a} + \bar{\bar{b}}, \bar{\bar{a}} + \bar{b}, \bar{b}, \bar{a})$.

    Next we compute $c_{k+1}t_kc_{k+1}t_kc_{k+1}v$ and compare to the previous cases. This time we need $8$ cases ($x \leq \bar{x}$ versus $x > \bar{x}$, for $x=a,b,c$) since the relative sizes of $c$ and $\bar{c}$ impact the calculation of the action of $c_{k+1}$. So, we need to consider  We use the previous cases to compute the action of $c_{k+1}$ with $b$ and $c$ playing the roles of $a$ and $b$ respectively. We show one of the cases below, the remaining seven follow analogously. \\

    \noindent\textbf{Case 1.2:} $a\le \bar{a}$, $b\le \bar{b}$, and $c> \bar{c}$. We apply Case 2, Case 1, then Case 3, to get:
    \begin{align*}
        &c_{k+1}t_kc_{k+1}t_kc_{k+1}(a,b,c,\bar{c},\bar{b},\bar{a}) = c_{k+1}t_kc_{k+1}t_k(a, b + \bar{\bar{c}}, \bar{c}, \bar{\bar{b}} + \bar{c}, b, \bar{a}) \\
        &= c_{k+1}t_kc_{k+1}(b + \bar{\bar{c}}, a, \bar{c}, \bar{\bar{b}} + \bar{c}, \bar{a}, b) = c_{k+1}t_k(b + \bar{\bar{c}}, a, \bar{c}, \bar{\bar{a}} + \bar{c}, a + \bar{\bar{b}}, b) \\
        &= c_{k+1}(a, b + \bar{\bar{c}}, \bar{c}, \bar{\bar{a}} + \bar{c}, b, a + \bar{\bar{b}})= (a,b,c,\bar{c}, \bar{\bar{a}} + b, a + \bar{\bar{b}}) = c_kv
    \end{align*}
    
\noindent    In each case we see that $c_{k+1}t_kc_{k+1}t_kc_{k+1}v = c_kv$ which completes the induction. 
\end{proof}

 For the following results, we retain the same notation as in the proof of Proposition \ref{prop:Single node} and directly compute both expressions based on the same cases.
 
\begin{lemma}\label{lemma:cactus toggle commute}
    For $1\le j\le m-2$, we have $c_jt_j = t_jc_j$.
\end{lemma}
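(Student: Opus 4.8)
The plan is to verify the identity by a direct computation, reusing the case analysis and the explicit formulas for $c_j$ from the proof of Proposition \ref{prop:Single node}. Since $t_j^2 = 1$, the relation $c_j t_j = t_j c_j$ is equivalent to $t_j c_j t_j = c_j$, and it suffices to check this on every $v \in B(n\varpi_1)$. As in that proof, only the entries $j, j+1, j+2, \overline{j+2}, \overline{j+1}, \overline{j}$ of $v$ matter (the rest are fixed by both $c_j$ and $t_j$), so I record $v$ by the tuple $(a,b,c,\bar c,\bar b,\bar a)$. The toggle $t_j$ swaps the number of $j$'s with the number of $(j+1)$'s and the number of $\bar j$'s with the number of $\overline{j+1}$'s, so $t_j(a,b,c,\bar c,\bar b,\bar a) = (b,a,c,\bar c,\bar a,\bar b)$, and it fixes $c,\bar c$; since $c_j$ also fixes $c,\bar c$, I may work throughout with the four-tuple $(a,b,\bar b,\bar a)$, on which $t_j$ acts by $(a,b,\bar b,\bar a)\mapsto(b,a,\bar a,\bar b)$.

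Next I would record how $t_j$ interacts with the four cases of Proposition \ref{prop:Single node}, which are governed by the signs of $a-\bar a$ and $b-\bar b$. Passing to $t_j v$ interchanges these two quantities, so conjugation by $t_j$ fixes Case 1 ($a\le\bar a$, $b\le\bar b$) and Case 4 ($a>\bar a$, $b>\bar b$) and swaps Case 2 ($a\le\bar a$, $b>\bar b$) with Case 3 ($a>\bar a$, $b\le\bar b$). It is therefore enough to verify $t_j c_j t_j v = c_j v$ for $v$ in Cases 1, 2, and 4: the Case 3 identity follows from the Case 2 identity by substituting $t_j v$ for $v$ and using $t_j^2 = 1$.

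In each of the three remaining cases the computation is short: one applies $t_j$ (which, by the previous paragraph, lands in the expected case), then the corresponding formula for $c_j$ from the proof of Proposition \ref{prop:Single node}, then $t_j$ again. Because the case hypotheses fix all the relevant signs, every occurrence of $\bar{\bar{x}} = |x-\bar x|$ becomes $\pm(x-\bar x)$, so all expressions are linear and collapse to the formula for $c_j v$ recorded in Proposition \ref{prop:Single node}. For instance, in Case 1 one gets
\begin{align*}
t_j c_j t_j (a,b,\bar b,\bar a) &= t_j c_j (b, a, \bar a, \bar b) = t_j\bigl(b,\, a,\, (\bar b - b) + a,\, b + (\bar a - a)\bigr) \\
&= \bigl(a,\, b,\, (\bar a - a) + b,\, a + (\bar b - b)\bigr) = c_j(a,b,\bar b,\bar a),
\end{align*}
and Cases 2 and 4 are analogous. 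The argument presents no genuine obstacle beyond bookkeeping: the only point requiring care is correctly identifying which of the four cases $t_j v$ falls into, exactly as in the computation of $c_{k+1}t_kc_{k+1}t_kc_{k+1}v$ in the proof of Proposition \ref{prop:Single node}.
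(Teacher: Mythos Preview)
Your proposal is correct and follows essentially the same approach as the paper: both arguments verify the commutation relation directly on an arbitrary $v\in B(n\varpi_1)$ by writing $v$ in the reduced 4-tuple $(a,b,\bar b,\bar a)$, applying the explicit description of $t_j$, and invoking the case formulas for $c_j$ from Proposition~\ref{prop:Single node}. The paper computes $c_jt_jv$ and $t_jc_jv$ separately (showing Case~1 and asserting the rest are analogous), whereas you compute $t_jc_jt_jv$ versus $c_jv$ and additionally observe that the $t_j$-symmetry reduces Case~3 to Case~2; this is a small economy but not a different method.
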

\begin{proof}
    Let $v \in B(n\varpi_1)$. The first case is shown below, the other four follow analogously.

    \noindent\textbf{Case 1:} $a\le \bar{a}$ and $b\le \bar{b}$. We compute as follows, using Case 1 from the proof of Proposition \ref{prop:Single node}: $c_jt_jv = c_jt_j(a,b,\bar{b},\bar{a})= c_j(b,a,\bar{a},\bar{b})= (b,a,a + \bar{\bar{b}}, \bar{\bar{a}} + b)= t_jc_jv.$
\end{proof}

\begin{lemma}\label{lemma:two element}
    For $1\le k\le m-2$, and $k=m$, we have 
    \begin{align*} t_kc_{k+1}f_k &\sim f_{k+1}t_kc_{k+1} \quad\quad t_kc_{k+1}e_k \sim e_{k+1}t_kc_{k+1} \\
        f_kc_{k+1}t_k &\sim c_{k+1}t_kf_{k+1} \quad\quad
        e_kc_{k+1}t_k \sim c_{k+1}t_ke_{k+1}\\
        t_{k+1}c_{k+1}f_k &\sim f_{k}t_{k+1}c_{k+1} \quad\quad t_{k+1}c_{k+1}e_k \sim e_kt_{k+1}c_{k+1} \\
        t_{m-2}c_mf_{m-2} &\sim f_mt_{m-2}c_m \quad\quad
        t_{m-2}c_me_{m-2} \sim e_mt_{m-2}c_m \\
        t_mc_mf_{m-2} &\sim f_{m-2}t_mc_m \quad\quad t_mc_me_{m-2} \sim e_{m-2}t_mc_m.
    \end{align*}
\end{lemma}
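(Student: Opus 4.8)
The plan is to prove each of the ten intertwining relations by the same kind of direct case computation used in Proposition~\ref{prop:Single node} and Lemma~\ref{lemma:cactus toggle commute}, working in the tableau model for $B(n\varpi_1)$ and tracking only the entries $k,k+1,k+2,\overline{k+2},\overline{k+1},\overline{k}$ via the tuple $(a,b,c,\bar c,\bar b,\bar a)$. For the first six relations (the generic $1\le k\le m-2$ case), the operators $c_{k+1}$, $t_k$, $t_{k+1}$, $f_k$, $e_k$, $f_{k+1}$, $e_{k+1}$ all act within this six-parameter window, so each side of each relation becomes an explicit piecewise-linear map of the tuple. I would fix the sign pattern of $(a-\bar a, b-\bar b, c-\bar c)$ — eight cases — and in each case read off the action of $c_{k+1}$ from the four cases already recorded in the proof of Proposition~\ref{prop:Single node} (with $b,c$ in the roles of $a,b$), the action of $t_k$ as the transposition $a\leftrightarrow b$, $\bar a\leftrightarrow\bar b$, the action of $t_{k+1}$ as $b\leftrightarrow c$, $\bar b\leftrightarrow\bar c$ (with the pair-collapsing understood when $k+1=m-1$, but here $k+1\le m-1$ and no collapse occurs for $k\le m-2$), and the action of $f_k,e_k$ as the appropriate single steps along the $(\mathfrak{sl}_2)_k$-string dictated by the tensor product rule. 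Composing and comparing the two sides in each case yields equality; one exhibits one representative case in full (as in the earlier proofs) and asserts the rest are analogous.

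For the last four relations I would handle the $D$-type corner separately. Here $k=m-2$, and the relevant operators are $c_m$, $t_{m-2}$, $t_m$, and $f_{m-2},e_{m-2},f_m,e_m$. The key structural input is Proposition~\ref{prop:DmHeap}: the local heap around the spin nodes is the $D$-type heap of $s_1\cdots s_{m-2}s_{m-1}s_m s_{m-2}\cdots s_1$, so the interaction of $t_{m-2}$ and $t_m$ mirrors the interaction of $t_m$ and $t_{m-2}$ under the diagram symmetry swapping the two branches at node $m-2$; this is exactly why $f_{m-2}$ on one side is replaced by $f_m$ on the other in the lines $t_{m-2}c_mf_{m-2}\sim f_mt_{m-2}c_m$, etc. Concretely, $c_m$ acts by inverting an $(\mathfrak{sl}_2)_m$-string and, on the tableau level (per the toggle/tableau dictionary recalled after Proposition~\ref{prop:RPP weight}), $t_m$ interchanges $m-1\leftrightarrow\overline m$ and $m\leftrightarrow\overline{m-1}$ with pair replacement; I would again parametrize a crystal element by the occupation numbers of $m-2$, $m-1$, $m$, $\overline m$, $\overline{m-1}$, $\overline{m-2}$, split into cases by the relevant sign patterns, and verify both sides agree. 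The statement "$k=m$" in the lemma is a typo for the $D$-corner case and should be read as $k=m-2$ with the spin node $m$ playing the role of $k+1$.

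The main obstacle I anticipate is bookkeeping rather than conceptual: the eight-case analysis for six relations, plus the spin-corner analysis for four more, is a large but mechanical computation, and the real risk is a sign or ordering slip in reading off $f_k$ versus $e_k$ from the tensor product rule (the left-versus-right bracketing in $B(\varpi_1)^{\otimes n}$ matters). To control this I would organize the computation so that $c_{k+1}$ is always substituted using the already-verified formulas of Proposition~\ref{prop:Single node}, reducing each check to: apply a known piecewise-linear map, apply a transposition, apply a known piecewise-linear map, and read off one $f$ or $e$ step — then compare with the single known $c_k$-type output on the other side. A secondary subtlety is making sure that in the relations involving $f_k,e_k$ the equivalence $\sim$ (equality as permutations of $\bigsqcup_n B(n\lambda)$) is what is being checked, i.e. that the maps agree on every element including those where some of $f_k,e_k,f_{k+1},e_{k+1}$ return $0$; the boundary cases where a string terminates must be included in the case list and not just the generic interior of each string.
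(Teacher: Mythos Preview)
Your proposal is correct and takes essentially the same route as the paper: direct case computation in the tableau model using the tuple $(a,b,c,\bar c,\bar b,\bar a)$, with $c_{k+1}$ substituted via the four cases already tabulated in Proposition~\ref{prop:Single node}. The one shortcut you miss is that the paper observes several of the ten relations are formally equivalent to one another---e.g.\ $t_kc_{k+1}f_k\sim f_{k+1}t_kc_{k+1}$ and $f_kc_{k+1}t_k\sim c_{k+1}t_kf_{k+1}$ are equivalent because $t_k,c_{k+1}$ are involutions, and the $f$-relation is equivalent to the corresponding $e$-relation because $e_i,f_i$ are partial inverses---so one only needs to run the full eight-case check for a handful of the identities rather than all ten.
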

\begin{proof}
    We retain the same notation as in the proof of Proposition \ref{prop:Single node} and directly compute both expressions based on the same cases. We show Case 1 for the identity $f_{k+1}t_kc_{k+1} \sim t_kc_{k+1}f_k$, the other cases and identities follow analogously.

    \noindent\textbf{Case 1.1:} $a\le \bar{a}$, $b\le \bar{b}$, and $c\le \bar{c}$.
    \begin{align*}
        &f_{k+1}t_kc_{k+1}(a,b,c,\bar{c},\bar{b},\bar{a}) = f_{k+1}t_k(a,b,c,\bar{\bar{b}} + c, b + \bar{\bar{c}}, \bar{a})\\
        &= f_{k+1}(b,a,c,\bar{\bar{b}} + c, \bar{a}, b + \bar{\bar{c}})= (b,a,c,\bar{\bar{b}} + c - 1, \bar{a} + 1, b + \bar{\bar{c}}) \\
        &t_kc_{k+1}f_k(a,b,c,\bar{c},\bar{b},\bar{a}) = t_kc_{k+1}(a,b,c,\bar{c},\bar{b}-1,\bar{a}+1)\\
        &= t_k(a,b,c,\bar{\bar{b}} + c - 1,b + \bar{\bar{c}}, \bar{a} + 1)= (b,a,c, \bar{\bar{b}} + c - 1, \bar{a} + 1, b + \bar{\bar{c}})
    \end{align*}

    We can see that $t_kc_{k+1}f_k \sim f_{k+1}t_kc_{k+1}$ and $f_kc_{k+1}t_k \sim c_{k+1}t_kf_{k+1}$ are equivalent, using the fact that $t_k$ and $c_{k+1}$ are involutions. Likewise $t_kc_{k+1}e_k \sim e_{k+1}t_kc_{k+1}$ and $e_kc_{k+1}t_k \sim c_{k+1}t_ke_{k+1}$ are equivalent. Finally to see that $t_kc_{k+1}f_k \sim f_{k+1}t_kc_{k+1}$ and $e_kc_{k+1}t_k \sim c_{k+1}t_ke_{k+1}$ are equivalent, we use the fact that the raising and lowering operators are partial inverses.
\end{proof}

\begin{lemma}\label{lemma:connected components}
    Let $1\le k\le m-2$. Then $t_k$ preserves the connected components of $B(n\varpi_1)|_{(\mathfrak{sl}_3)_{\{k,k+1\}}}$. Furthermore, $t_{m-2}$ preserves the connected components of $B(n\varpi_1)|_{(\mathfrak{sl}_3)_{\{m-2,m\}}}$.
\end{lemma}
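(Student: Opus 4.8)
The plan is to show that $t_k$ commutes, up to the $(\mathfrak{sl}_3)_{\{k,k+1\}}$-crystal isomorphism type, with the raising and lowering operators $e_k, f_k, e_{k+1}, f_{k+1}$ that generate the connected components of the restriction $B(n\varpi_1)|_{(\mathfrak{sl}_3)_{\{k,k+1\}}}$. Concretely, two elements $v, v'$ lie in the same connected component of this restriction precisely when $v'$ is obtained from $v$ by a sequence of applications of $e_k, f_k, e_{k+1}, f_{k+1}$; so it suffices to check that for each such operator $g$, if $g(v) \ne 0$ then $g(t_k v) \ne 0$ and $t_k (g v)$ lies in the same component as $t_k v$ — equivalently, that the image of the component of $v$ under $t_k$ is again a single component. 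The cleanest route is to produce, for each generator $g$, an identity of the form $t_k g \sim g' t_k$ where $g'$ is again one of $e_k, f_k, e_{k+1}, f_{k+1}$ (possibly the same one). Reading off Lemma \ref{lemma:two element} with $c_{k+1}$ replaced by the identity would be ideal, but since that lemma is phrased with $c_{k+1}$ present, what I would actually do is re-run the same case analysis (the eight cases $x \lessgtr \bar x$ for $x = a,b,c$) from the proof of Proposition \ref{prop:Single node} for the bare relations $t_k e_k$ versus $e_k t_k$ (these commute since $t_k$ only permutes the multiplicities $a \leftrightarrow b$ and $\bar a \leftrightarrow \bar b$, which is exactly the statement that $t_k$ is the Bender–Knuth-type involution $\sigma_k$ that commutes with the $k$-chain reflection only after composing with it — more precisely $t_k$ conjugates the $k$-string structure into itself) and, crucially, $t_k e_{k+1}$ versus $e_k t_k$ and $t_k f_{k+1}$ versus $f_k t_k$.

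The key steps, in order: (1) Recall from the SSYT description in Section 6 that $t_k$ acts on the relevant window $(a,b,c,\bar c,\bar b,\bar a)$ by swapping $a \leftrightarrow b$ and $\bar a \leftrightarrow \bar b$ after passing to the "straightened" form dictated by the tensor product rule; record the explicit formula for $t_k$ on each of the eight sign-cases, exactly as was implicitly used in Lemma \ref{lemma:cactus toggle commute} and Lemma \ref{lemma:two element}. (2) Compute $e_k, f_k$ on $(a,b,c,\bar c,\bar b,\bar a)$ via the tensor product rule: $f_k$ acts on the boundary between the $k$-block and the $(k{+}1)$-block, $f_{k+1}$ on the boundary between the $(k{+}1)$-block and the $(k{+}2)$-block. (3) Check the identities $t_k e_k \sim e_k t_k$, $t_k f_k \sim f_k t_k$, $t_k e_{k+1} \sim e_k t_k$, $t_k f_{k+1} \sim f_k t_k$ case by case; each is a short bookkeeping computation of the type carried out in the preceding lemmas, and by the partial-inverse relation between raising and lowering operators it is enough to verify the "$e$" identities and one "$f$" identity. (4) Conclude: if $v, v'$ are in the same $(\mathfrak{sl}_3)_{\{k,k+1\}}$-component, write $v'$ as a word in $e_k,f_k,e_{k+1},f_{k+1}$ applied to $v$; pushing $t_k$ through using the identities from (3) rewrites $t_k v'$ as a (different) word in $e_k,f_k$ applied to $t_k v$, so $t_k v'$ is in the component of $t_k v$. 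Since $t_k$ is a bijection, it maps components bijectively onto components. (5) The final sentence about $t_{m-2}$ and $(\mathfrak{sl}_3)_{\{m-2,m\}}$ is handled identically, using the last two displayed identities of Lemma \ref{lemma:two element} (with $c_m$ removed) and the fact, noted in the proof of Proposition \ref{prop:Single node}, that $\{m-2,m\}$ spans an $A_2 = \mathfrak{sl}_3$ subdiagram whose crystal window is governed by the same six-entry bookkeeping with the roles of $m-1,m$ interchanged.

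The main obstacle is step (3): making sure that the "straightening" inherent in the tensor product rule interacts correctly with the toggle formula in every one of the eight sign-cases, particularly the cases where applying $e_{k+1}$ changes which of $c \le \bar c$ or $c > \bar c$ holds, or where a pair of $m,\bar m$ (in the $t_{m-2}$ variant) gets created or destroyed — these are precisely the places where a naive "$t_k$ just permutes multiplicities" heuristic can fail, and it is the same subtlety that forced the eight-case split in Lemma \ref{lemma:two element}. I expect, however, that essentially all of these computations are already subsumed in the cases worked out there: setting the $c_{k+1}$ (resp. $c_m$) factor to the identity in those identities and using $t_k^2 = 1$ should yield exactly the commutation relations needed here, so the "proof" is largely a matter of extracting the right specializations rather than new computation.
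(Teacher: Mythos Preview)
Your proposed commutation identities in step (3) are false, and in fact internally inconsistent. If $t_k e_{k+1} \sim e_k t_k$ held, then conjugating by $t_k$ (using $t_k^2=1$) would give $e_{k+1} \sim t_k e_k t_k$; combined with your claimed $t_k e_k \sim e_k t_k$ this forces $e_{k+1} \sim e_k$, which is absurd. Even at the level of weights the relations fail: $\wt(t_k f_{k+1} v) = s_k(\wt(v)-\alpha_{k+1}) = s_k\wt(v)-\alpha_k-\alpha_{k+1}$, whereas $\wt(f_k t_k v) = s_k\wt(v)-\alpha_k$. The suggestion that these relations arise from Lemma \ref{lemma:two element} by ``setting $c_{k+1}$ to the identity'' is not legitimate: $c_{k+1}$ is a nontrivial involution (it reverses each $(k{+}1)$-string), and the identities there genuinely depend on it. More broadly, toggles of Bender--Knuth type do \emph{not} enjoy clean commutation relations with individual crystal operators; this is precisely why the Berenstein--Kirillov/cactus story is subtle.

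The paper takes a different and much more direct route: rather than attempting to push $t_k$ past crystal operators, it shows that $v$ and $t_k v$ lie in the same $\mathfrak{sl}_3$-component by computing, in each of the eight sign-cases, the highest weight \emph{element} of the component containing $v$ (via an explicit string of $e_k$'s and $e_{k+1}$'s) and the highest weight element of the component containing $t_k v$, and checking these two elements coincide. Since a connected component of an $\mathfrak{sl}_3$-crystal is determined by its highest weight vertex, this suffices. If you want to salvage your strategy, you would need to replace the false pointwise relations by something like ``$t_k v$ is reachable from $v$ by a word in $e_k,f_k,e_{k+1},f_{k+1}$'' directly --- but exhibiting such a word in each case is essentially the paper's computation.
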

\begin{proof}
    Let $v\in B(n\varpi_1)$. We will check that $v$ and $t_kv$ have the same $\mathfrak{sl}_3$ highest weight. We retain the same notation as in the proof of Proposition \ref{prop:Single node} and compute the $\mathfrak{sl}_3$ highest weight of both expressions. We can check that the highest weight elements of the branched components of $B(n\varpi_1)|_{(\mathfrak{sl}_3)_{\{k,k+1\}}}$ are of the form $(a,0,c,\bar{c},0,0)$ where $c\le \bar{c}$. We consider the same 8 cases as in Proposition \ref{prop:Single node}, and demonstrate the first case below.

    \noindent\textbf{Case 1.1:} $a\le \bar{a}$, $b\le \bar{b}$, and $c\le \bar{c}$. We get highest weight $(a + b,0,c,\bar{a} + \bar{b} + \bar{c},0,0)$:
    \begin{align*}\small
        e_k^b e_{k+1}^{\bar{a} + \bar{b}} e_k^{\bar{a}}v &= e_k^b e_{k+1}^{\bar{a} + \bar{b}} e_k^{\bar{a}}(a,b,c,\bar{c},\bar{b},\bar{a})= e_k^b e_{k+1}^{\bar{a} + \bar{b}} (a,b,c,\bar{c},\bar{a} + \bar{b},0) = e_k^b (a,b,c,\bar{a} + \bar{b} + \bar{c},0,0) \\
        e_k^a e_{k+1}^{\bar{a} + \bar{b}} e_k^{\bar{b}}t_k v &= e_k^a e_{k+1}^{\bar{a} + \bar{b}} e_k^{\bar{b}}(b,a,c,\bar{c},\bar{a},\bar{b})
        = e_k^a e_{k+1}^{\bar{a} + \bar{b}} (b,a,c,\bar{c},\bar{a} + \bar{b},0)= e_k^a (b,a,c,\bar{a} + \bar{b} + \bar{c},0,0)
    \end{align*}

    In each case, $v$ and $t_kv$ have the same $\mathfrak{sl}_3$ highest weight and we conclude that $t_k$ stabilizes the components of the branched crystal $B(n\varpi_1)|_{(\mathfrak{sl}_3)_{\{k,k+1\}}}$.
\end{proof}

\begin{thm}\label{thm:type A subdiagram}
    Let $J = [i,j] = \{i,i+1, \dots, j\}$ be a subdiagram of the Dynkin diagram $I$ of type $D_m$ with either $1\le i<j\le m-1$ or $j = m$. Then 
    \[
    c_J \sim (c_jt_{j-1}\cdots t_i)(c_jt_{j-1}\cdots t_{i+1})\cdots (c_jt_{j-1})c_j.
    \]
\end{thm}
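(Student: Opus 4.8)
The strategy is to prove the statement by induction on the length $j - i$ of the type-$A$ subdiagram $J = [i,j]$, using the cactus relation $c_K c_L = c_L c_{\theta_L(K)}$ together with the concrete toggle identities established in Lemmas \ref{lemma:cactus toggle commute}, \ref{lemma:two element} and \ref{lemma:connected components}. The base case $i = j$ is Proposition \ref{prop:Single node}, which identifies $c_j \sim r_j$ (and for the claimed formula with a single factor $c_j$ there is nothing more to check). For the inductive step I would fix $J = [i,j]$ and set $J' = [i+1, j]$, so that $J' \subseteq J$ and $J = J' \cup \{i\}$ with $i$ adjacent to $i+1$; the Dynkin automorphism $\theta_{J}$ of the type-$A$ chain $[i,j]$ is the order-reversing flip, so $\theta_J(i) = j$ and $\theta_J$ restricted to $J'$ identifies $[i+1,j]$ with $[i,j-1]$.

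\textbf{Main steps.} First I would recall that $c_J = \xi_{B_J}$ is the Sch\"utzenberger involution of the restriction of $B(n\varpi_1)$ to $J$, and use the standard fact (valid for any crystal) that inside a type-$A_{\,\ell}$ crystal the full Sch\"utzenberger involution factors as a product of "partial reversal" operators: concretely, $\xi_{B_{[i,j]}}$ can be built from $\xi_{B_{[i,j-1]}}$ (equivalently $c_{[i,j-1]}$) and the single generator $c_j$, via a relation of the shape $c_{[i,j]} \sim c_{[i,j-1]}\, c_j\, (\text{a string of }t\text{'s})$, which is the $D_m$-avatar of the classical recursive description of evacuation by "elementary" jeu-de-taquin/Bender--Knuth-type slides. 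Rather than invoking that black box, I would instead verify the precise recursion directly: using the cactus relation for $J' \subseteq J$ one gets $c_{J} c_{J'} = c_{J'} c_{\theta_{J'}(... )}$ — no, more usefully, one writes $c_J$ in terms of $c_{J'}$ and $c_{\{i\}} = c_i$ by noting that $J = J' \cup \{i\}$ and peeling off the node $i$. The key computational input is then Lemma \ref{lemma:two element}, which says precisely that conjugating a crystal operator $f_k$ (resp. $e_k$) by $t_k c_{k+1}$ turns it into $f_{k+1}$ (resp. $e_{k+1}$); iterating, the composite $c_j t_{j-1}\cdots t_{k+1}$ intertwines the $(\mathfrak{sl}_2)_k$-action with the $(\mathfrak{sl}_2)_j$-action. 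From this one shows by a telescoping argument that the operator
\[
\Xi_i := (c_jt_{j-1}\cdots t_i)(c_jt_{j-1}\cdots t_{i+1})\cdots (c_jt_{j-1})c_j
\]
satisfies the defining intertwining relations of $\xi_{B_{[i,j]}}$, namely $\wt(\Xi_i(b)) = w_{0,J}\,\wt(b)$ and $e_k \Xi_i = \Xi_i f_{\theta_J(k)}$, $f_k \Xi_i = \Xi_i e_{\theta_J(k)}$ for all $k \in [i,j]$. For the weight statement I would use Proposition \ref{prop:RPP weight} ($\wt(t_i\Phi) = s_i \wt(\Phi)$) and the weight behaviour of $c_j$ from Proposition \ref{prop:Single node}, checking that the product of the relevant reflections realizes $w_{0,J}$. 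For the intertwining relations I would split $[i,j]$ into the "new" node $i$, for which the innermost factor $c_j t_{j-1}\cdots t_i$ must be analyzed using Lemma \ref{lemma:two element} directly, and the "old" nodes $i+1,\dots,j$, for which the outer part of $\Xi_i$ is $\Xi_{i+1}$ (the inductive hypothesis) and one only needs that the extra leftmost factor $c_j t_{j-1}\cdots t_i$ commutes appropriately with, or correctly transports, those operators — here Lemma \ref{lemma:cactus toggle commute} ($c_j t_j = t_j c_j$, and more generally $t_k$ commutes with operators of far-away index) and Lemma \ref{lemma:connected components} (that $t_k$ preserves $(\mathfrak{sl}_3)_{\{k,k+1\}}$-components, which controls the interaction of $t_k$ with both $f_k$ and $f_{k+1}$) do the bookkeeping. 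Since the Sch\"utzenberger involution is the \emph{unique} set map with these properties, matching all the relations forces $c_J \sim \Xi_i$.

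\textbf{Expected main obstacle.} The delicate point is the interaction at the "seam": showing that prepending the factor $c_j t_{j-1}\cdots t_i$ to $\Xi_{i+1}$ does not disturb the already-established intertwining relations for the nodes $i+1,\dots,j-1$ while simultaneously installing the correct relation for the new node $i$ and for node $j$ (whose partner under $\theta_J$ changes when the diagram grows). Concretely, one must track how $t_k$ and $c_{k+1}$ fail to commute with $f_{k+1}$, $e_{k+1}$ — this is exactly the content of the non-commuting identities in Lemma \ref{lemma:two element}, and assembling them in the right order so the "errors" telescope is where the real work lies. I expect this to reduce, as in the proof of Proposition \ref{prop:Single node}, to a finite case analysis on the relative sizes of the filling multiplicities $(a,b,c,\bar c,\bar b,\bar a)$ localized to the relevant window of entries, so that each identity becomes a routine (if tedious) tuple computation; the conceptual content is entirely in choosing the induction and invoking the three lemmas in the correct combination.
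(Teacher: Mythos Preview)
Your approach is essentially the paper's: induct on $|J|$, and at each stage verify that $\Xi_i$ satisfies the three properties that uniquely characterize the Sch\"utzenberger involution on $B_J$ (weight $w_{0,J}\wt(b)$, preservation of branched components via Lemma~\ref{lemma:connected components}, and the intertwining $\Xi_i f_k = e_{\theta_J(k)}\Xi_i$), handling the intertwining by separating the cases $i+2\le k\le j$, $k=i+1$, and $k=i$. One small correction to your expectations: no further tuple case analysis is needed at this stage --- once Lemmas~\ref{lemma:cactus toggle commute} and~\ref{lemma:two element} are available, the inductive step is a purely formal manipulation of operator words (with the $k=i$ case requiring two layers of the induction hypothesis, for both $J\setminus\{i\}$ and $J\setminus\{i,i+1\}$), so the ``expected main obstacle'' is lighter than you anticipate.
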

\begin{proof}
     By induction on $|J|$. When $|J| = 2$, we have $\wt(c_jb) = \wt(t_jb) = s_j\wt(b)$, so $\wt(c_jt_{j-1}c_jb) = s_js_{j-1}s_j\wt(b) = w_0^J\wt(b)$. By Lemma \ref{lemma:connected components}, we see that $c_jt_{j-1}c_j$ preserves the connected components of $B(n\varpi_1)|_{(\mathfrak{sl}_3)_{\{j-1,j\}}}$. Using Lemma \ref{lemma:two element}, we have that
     \begin{align*}
         c_jt_{j-1}c_jf_j &= c_jt_{j-1}e_jc_j= e_{j-1}c_jt_{j-1}c_j= e_{\theta_J(j)}c_jt_{j-1}c_j \\
          c_jt_{j-1}c_jf_{j-1} &= c_jf_jt_{j-1}c_j = e_jc_jt_{j-1}c_j= e_{\theta_J(j-1)}c_jt_{j-1}c_j
     \end{align*}
     and likewise for $e_j$ and $e_{j-1}$. Hence $c_J \sim c_jt_{j-1}c_j$ as it satisfies the properties which uniquely characterize the cactus action. We can similarly see that the expression acts correctly on weights and preserves the connected components of the branched crystals. For the inductive step, when $i+2\le k\le j$, we have
    \begin{align*}
         (c_jt_{j-1}\cdots t_i)c_{J\backslash\{i\}}f_k &= c_j\cdots t_ie_{\theta_{J\backslash\{i\}}(k)}c_{J\backslash\{i\}}= c_j\cdots t_{i+j+1-k}t_{i+j-k}e_{i+j+1-k}\cdots c_{J\backslash\{i\}}\\
         &= c_j\cdots t_{i+j+1-k}c_{i+j+1-k}^2t_{i+j-k}e_{i+j+1-k}\cdots c_{J\backslash\{i\}}\\
         &= c_j\cdots t_{i+j+1-k}c_{i+j+1-k}e_{i+j-k}c_{i+j+1-k}t_{i+j-k}\cdots c_{J\backslash\{i\}}\tag{Lemma \ref{lemma:two element}}\\
         &= c_j\cdots e_{i+j-k}t_{i+j+1-k}c_{i+j+1-k}c_{i+j+1-k}t_{i+j-k}\cdots c_{J\backslash\{i\}}\tag{Lemma \ref{lemma:two element}}\\
         &= e_{i+j-k}c_j\cdots t_ic_{J\backslash\{i\}}= e_{\theta_J(k)}c_j\cdots t_ic_{J\backslash\{i\}}.
    \end{align*}
    Next, when $k = i+1$ we have
    \begin{align*}
         (c_jt_{j-1}\cdots t_i)c_{J\backslash\{i\}}f_{i+1} &= c_j\cdots t_ie_{\theta_{J\backslash\{i\}}(i+1)}c_{J\backslash\{i\}}
         = c_jt_{j-1}e_j \cdots c_{J\backslash\{i\}}\\
         &= e_{j-1}c_jt_{j-1} \cdots c_{J\backslash\{i\}} 
         = e_{\theta_J(i+1)}c_j\cdots t_i c_{J\backslash\{i\}}. \tag{Lemma \ref{lemma:two element}}
    \end{align*}
    Finally, when $k = i$, by rearranging and using the inductive hypothesis we have (in step 3 use Lemma \ref{lemma:cactus toggle commute}, in steps 4 and 6 use Lemma \ref{lemma:two element}): 
    \begin{align*}
         &(c_j\cdots t_i)(c_j\cdots t_{i+1})c_{J\backslash \{i,i+1\}}f_i = (c_j\cdots t_i)(c_j\cdots t_{i+1})f_ic_{J\backslash \{i,i+1\}}\\
         &= (c_j\cdots t_i)(c_j\cdots c_{i+1}c_{i+1}t_{i+1})f_ic_{J\backslash \{i,i+1\}}
         = (c_j\cdots t_i)c_j\cdots c_{i+1}t_{i+1}c_{i+1}f_ic_{J\backslash \{i,i+1\}}\\
         &= (c_j\cdots t_i)c_j\cdots t_{i+2}c_{i+1}f_it_{i+1}c_{i+1}c_{J\backslash \{i,i+1\}}= (c_j\cdots t_i)c_j\cdots t_{i+2}t_it_ic_{i+1}f_it_{i+1}c_{i+1}c_{J\backslash \{i,i+1\}}\\
         &= (c_j\cdots t_i)c_j\cdots t_{i+2}t_if_{i+1}t_ic_{i+1}t_{i+1}c_{i+1}c_{J\backslash \{i,i+1\}} = (c_j\cdots t_{i+1})(c_j\cdots t_{i+2})f_{i+1}\cdots c_{J\backslash \{i,i+1\}}\\
         &= c_{J\backslash \{i\}}c_{J\backslash \{i,i+1,i+2\}}f_{i+1}\cdots c_{J\backslash \{i,i+1\}}= c_{J\backslash \{i\}}f_{i+1}c_{J\backslash \{i,i+1,i+2\}}\cdots c_{J\backslash \{i,i+1\}}\\
         &= e_jc_{J\backslash \{i\}}c_{J\backslash \{i,i+1,i+2\}}\cdots c_{J\backslash \{i,i+1\}}= e_j(c_j\cdots t_i)(c_j\cdots t_{i+1})\cdots c_{J\backslash \{i,i+1\}}\\
         &= e_{\theta_J(i)}(c_j\cdots t_i)(c_j\cdots t_{i+1})\cdots c_{J\backslash \{i,i+1\}}
    \end{align*}
    These cases, together with analogous arguments for the $e_i$'s complete the proof.        
\end{proof}

\begin{thm}\label{thm:type D subdiagram}
    Let $1\le j\le m-2$, and set $J = \{j, j+1, \dots, m-2, m-1, m\}$. If $s_{i_1}\cdots s_{i_l}$ is any reduced word for the longest element of the subgroup $W_J$, then $c_J \sim c_{i_1}\cdots c_{i_l}$ in for the crystal $B(n\varpi_1)$.
\end{thm}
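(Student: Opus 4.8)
The plan is to argue by induction on $|J| = m-j+1$, using the characterization of $c_J$ as the Sch\"utzenberger involution $\xi_{B_J}$. Recall that $\xi_{B_J}$ is the unique operator which, on each connected component of the branched crystal $B(n\varpi_1)|_{\g_J}$, acts on weights by $w_{0,J}$ and satisfies $\xi_{B_J}e_i = f_{\theta_J(i)}\xi_{B_J}$ and $\xi_{B_J}f_i = e_{\theta_J(i)}\xi_{B_J}$ for all $i\in J$. So, fixing a reduced word $s_{i_1}\cdots s_{i_l}$ of $w_{0,J}$, it suffices to show that $C := c_{i_1}\cdots c_{i_l}$ (i) acts on weights by $w_{0,J}$, (ii) preserves the connected components of $B(n\varpi_1)|_{\g_J}$, and (iii) intertwines $e_i$ with $f_{\theta_J(i)}$ and $f_i$ with $e_{\theta_J(i)}$ for each $i\in J$. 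Property (i) is immediate: by Propositions~\ref{prop:Single node} and~\ref{prop:RPP weight} each $c_{i_k}$ acts on weights by $s_{i_k}$, so $C$ acts by $s_{i_1}\cdots s_{i_l}=w_{0,J}$, independently of the chosen word. The content lies in (ii) and (iii).

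I would first reduce to one convenient reduced word. Any two reduced words for $w_{0,J}$ are linked by commutations $s_is_{i'}=s_{i'}s_i$ (nonadjacent $i,i'$) and braid moves $s_is_{i'}s_i=s_{i'}s_is_{i'}$ (adjacent $i,i'$). The first translates to $c_ic_{i'}=c_{i'}c_i$ as operators on $B(n\varpi_1)$, which is one of the defining cactus relations. The second translates to the operator identity $c_ic_{i'}c_i\sim c_{i'}c_ic_{i'}$ for adjacent $i,i'$; this does \emph{not} follow from the abstract cactus relations, so I would verify it directly on $B(n\varpi_1)$ by the tuple-by-tuple case analysis used in Proposition~\ref{prop:Single node} (equivalently, via $c_i\sim r_i$, the rank-two case of Theorem~\ref{thm:type A subdiagram}, and Lemma~\ref{lemma:cactus toggle commute}). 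Granting this, $C$ is independent of the reduced word and we may choose one adapted to the induction.

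For the inductive step I would use the length-additive factorization $w_{0,J}=w_{0,J'}\,u$, where $J'=J\setminus\{j\}$ is again of type $D$ with spin nodes $m-1,m$, and $u=s_js_{j+1}\cdots s_{m-2}s_{m-1}s_ms_{m-2}\cdots s_{j+1}s_j$ is the fully commutative minimal-length coset representative for $W_J/W_{J'}$ (its heap is that of the vector crystal of $\g_J$). Then a reduced word for $w_{0,J'}$ followed by this word for $u$ is a reduced word for $w_{0,J}$; by the inductive hypothesis the initial segment acts as $c_{J'}=\xi_{B_{J'}}$, and, replacing each $c_k$ by $r_k$ via Proposition~\ref{prop:Single node}, the segment $c_jc_{j+1}\cdots c_{m-2}c_{m-1}c_mc_{m-2}\cdots c_{j+1}c_j$ coming from $u$ becomes a product $T$ of toggles. (The base case $j=m-2$, where $J=\{m-2,m-1,m\}$ is of type $A_3$ with $m-2$ central, is handled by a direct computation using $c_{m-1}\sim t_{m-1}$ and $c_m\sim t_m$.) To verify (ii) and (iii) for $C=\xi_{B_{J'}}\cdot T$, I would, for each $i\in J$, push $e_i$ (resp.\ $f_i$) leftward through the toggle factors of $T$ one runner at a time and then past $\xi_{B_{J'}}$, using Lemmas~\ref{lemma:cactus toggle commute},~\ref{lemma:two element}, and~\ref{lemma:connected components}. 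For $i\in J'$ this is the same calculation as in the proof of Theorem~\ref{thm:type A subdiagram}, now carried down the first half of $u$, around the ``bend'' $s_{m-2}s_{m-1}s_ms_{m-2}$, and back up the second half; the new node $i=j$ is handled by a rearrangement using the inductive hypothesis, exactly as the $k=i$ case is handled there.

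The main obstacle is the bend at the spin nodes. When $e_i$ or $f_i$ reaches the segment of $T$ built from $c_{m-1}$, $c_m$, $t_{m-2}$, $t_m$, the intertwining relations one must use are the special cases of Lemma~\ref{lemma:two element} --- $t_{m-2}c_me_{m-2}\sim e_mt_{m-2}c_m$, $t_mc_me_{m-2}\sim e_{m-2}t_mc_m$, and their $f$-analogues --- and these are precisely what permute the roles of the nodes $m-2$, $m-1$, $m$. The crux is to track how many such interchanges occur as $e_i$ is pushed all the way around $u$ and then past $\xi_{B_{J'}}$ (which, by the inductive hypothesis, already reindexes by $\theta_{J'}$), and to confirm that the net reindexing is exactly $\theta_J$ --- the identity if $m-j+1$ is even, and the transposition $(m-1,\,m)$ if $m-j+1$ is odd. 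A secondary point requiring care is the crystal-dependent braid relation invoked in the reduction step above.
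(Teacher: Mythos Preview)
Your proposal is correct and follows essentially the same approach as the paper: verify the three defining properties of $\xi_{B_J}$ for a specific reduced word by induction on $|J|$, pushing $f_k$ and $e_k$ through the factorization using Proposition~\ref{prop:Single node} and Lemma~\ref{lemma:two element}, and handle the remaining reduced words via the braid relation for adjacent single-node cactus generators on $B(n\varpi_1)$. The only cosmetic difference is that the paper factors as $q_j = p_j\,q_{j+1}$ with $p_j = c_jc_{j+1}\cdots c_{m-2}c_{m-1}c_mc_{m-2}\cdots c_j$ on the left (i.e.\ $p_j\cdot c_{J'}$ rather than your $c_{J'}\cdot T$), and carries out the spin-node ``bend'' computations explicitly by inserting $c_{m-2}\sim c_{m-1}t_{m-2}c_{m-1}t_{m-2}c_{m-1}$ (or the $c_m$ version) at the needed moments rather than globally rewriting the whole $p_j$ in toggles.
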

\begin{proof}
    We will establish the claim for a particular choice of reduced word and then use the fact that adjacent single node cactus generators braid.

    Let $p_{m-1} = c_{m-1}$, $p_m = c_m$, and $p_i = c_ic_{i+1}\cdots c_{m-2}c_{m-1}c_mc_{m-2} \cdots c_{i+1}c_i$ for $1\le i \le m-2$. Define $q_i = p_ip_{i+1}\cdots p_{m-2}p_{m-1}p_m$ for $1\le i\le m-2$. Then $q_i$ is a reduced expression for the longest element of the Weyl group generated by $s_i, \dots, s_m$.

    We claim that $c_J \sim q_j$. It is immediate that $\wt(q_jb) = w_0^J\wt(b)$ and that $q_j$ preserves the branched components of $B(n\varpi_1)$ as each single node cactus generator does. It remains to check that $q_jf_kb = e_{\theta_J(k)}q_jb$ for all $b\in B(n\varpi_1)$ and all $j\le k\le m$. We will proceed by induction. When $j = m-2$, we have that $q_j = c_{m-2}c_{m-1}c_mc_{m-2}c_{m-1}c_m$. (in step 4 use Prop \ref{prop:Single node}, in steps 3 and 7 use Lemma \ref{lemma:two element}):
    \begin{align*}
        &q_jf_{m-2} = c_{m-2}c_{m-1}c_mc_{m-2}c_{m-1}c_mf_{m-2}= c_{m-2}c_{m-1}c_mc_{m-2}c_{m-1}t_{m-2}t_{m-2}c_mf_{m-2}\\
        &= c_{m-2}c_{m-1}c_mc_{m-2}c_{m-1}t_{m-2}f_{m}t_{m-2}c_m \\
        &= c_{m-2}c_{m-1}c_m(c_{m-1}t_{m-2}c_{m-1}t_{m-2}c_{m-1})c_{m-1}t_{m-2}f_{m}t_{m-2}c_m \\
        &= c_{m-2}c_mt_{m-2}c_{m-1}f_mt_{m-2}c_m= c_{m-2}c_mt_{m-2}f_mc_{m-1}t_{m-2}c_m\\
        &= c_{m-2}f_{m-2}c_mt_{m-2}c_{m-1}t_{m-2}c_m = e_{m-2}c_{m-2}c_mt_{m-2}c_{m-1}t_{m-2}c_m= e_{\theta_J(m-2)}q_j.
    \end{align*}
    Similarly, we have (in steps 4 and 7 use Prop \ref{prop:Single node}, in steps 6 and 9 use Lemma \ref{lemma:two element})
    \begin{align*}
       & q_jf_{m-1} = c_{m-2}c_{m-1}c_mc_{m-2}c_{m-1}c_mf_{m-1}= c_{m-2}c_{m-1}c_mc_{m-2}c_{m-1}f_{m-1}c_m\\
        &= c_{m-2}c_{m-1}c_mc_{m-2}e_{m-1}c_{m-1}c_m= c_{m-2}c_{m-1}c_m(c_{m-1}t_{m-2}c_{m-1}t_{m-2}c_{m-1})e_{m-1}c_{m-1}c_m \\
        &= c_{m-2}c_mt_{m-2}c_{m-1}t_{m-2}f_{m-1}c_{m-1}c_{m-1}c_m= c_{m-2}c_mt_{m-2}f_{m-2}c_{m-1}t_{m-2}c_m \\
        &= (c_{m}t_{m-2}c_{m}t_{m-2}c_{m})c_mt_{m-2}f_{m-2}c_{m-1}t_{m-2}c_m = c_m t_{m-2}c_m f_{m-2}c_{m-1}t_{m-2}c_m \\
        &= c_m f_m t_{m-2}c_m c_{m-1}t_{m-2}c_m = e_m c_m t_{m-2}c_m c_{m-1}t_{m-2}c_m= e_{\theta_J(m-1)}q_j.
    \end{align*}
    and analogously $q_jf_m = e_{\theta_J(m)}q_j$. The argument for the $e$'s are identical, which establishes the base case. Suppose $1\le j\le m-3$ and that $c_{J\backslash \{j\}} \sim q_{j+1}$. If $j<k\le m-2$, then we have (in steps 5 and 9 use Prop \ref{prop:Single node}, in steps 7 and 11 use Lemma \ref{lemma:two element}):
    \begin{align*}
        q_jf_k &= p_jq_{j+1}f_k= p_je_kq_{j+1}= c_j\cdots c_je_kq_{j+1}= c_j \cdots c_{k-1}e_k\cdots c_jq_{j+1}\\
        &= c_j \cdots c_k(c_kt_{k-1}c_kt_{k-1}c_k)e_k \cdots c_j q_{j+1}= c_j \cdots t_{k-1}c_kt_{k-1}f_kc_k \cdots c_jq_{j+1}\\
        &= c_j \cdots t_{k-1}f_{k-1}c_kt_{k-1}\cdots c_jq_{j+1} = c_j \cdots c_{k-1}c_kt_{k-1}f_{k-1} \cdots c_jq_{j+1}\\
        &= c_j \cdots (c_kt_{k-1}c_kt_{k-1}c_k)c_kt_{k-1}f_{k-1} \cdots c_jq_{j+1} = c_j \cdots c_kt_{k-1}c_kf_{k-1} \cdots c_jq_{j+1}\\
        &= c_j \cdots c_kf_kt_{k-1}c_k \cdots c_jq_{j+1} = c_j \cdots e_kc_kt_{k-1}\cdots c_jq_{j+1}= e_kp_jq_{j+1}= e_{\theta_J(k)}q_j.
    \end{align*}
    For the spin nodes, it suffices to check that $p_jf_{m-1} = f_mp_j$ and $p_jf_m = f_{m-1}p_j$. We have (in step 2 use Proposition \ref{prop:Single node}, in steps 5 and 6 use Lemma \ref{lemma:two element}):
    \begin{align*}
        p_jf_{m-1} &= c_j \cdots c_jf_{m-1}= c_j \cdots c_{m-2}c_mc_{m-1}c_{m-2}f_{m-1}\cdots c_j\\
        &= c_j \cdots (c_{m}t_{m-2}c_{m}t_{m-2}c_{m})c_mc_{m-1}(c_{m-1}t_{m-2}c_{m-1}t_{m-2}c_{m-1})f_{m-1} \cdots c_j \\
        &= c_j \cdots c_mt_{m-2}c_mc_{m-1}t_{m-2}c_{m-1}f_{m-1} \cdots c_j= c_j \cdots c_mc_{m-1}t_{m-2}e_{m-1}c_{m-1} \cdots c_j\\
        &= c_j \cdots c_mt_{m-2}c_me_{m-2}c_{m-1}\cdots c_j= c_j \cdots c_me_mt_{m-2}c_mc_{m-1} \cdots c_j\\
        &= c_j \cdots f_mc_mt_{m-2}c_mc_{m-1}t_{m-2}c_{m-1} \cdots c_j= f_m p_j.
    \end{align*}
    and likewise $p_jf_m = f_{m-1}p_j$. Lastly, when $j = k$, we have (in step 5 use Proposition \ref{prop:Single node}, in steps 4 and 7 use Lemma \ref{lemma:two element}):
    \begin{align*}
        q_jf_j &= p_jp_{j+1}q_{j+2}f_j= c_jc_{j+1}\cdots c_{j+1}c_jc_{j+1}\cdots c_{j+1}f_jq_{j+2}\\
        &= c_jc_{j+1}t_j\cdots t_jc_{j+1}c_jc_{j+1}t_j \cdots t_jc_{j+1}f_jq_{j+2}\\
        &= c_jc_{j+1}t_jc_{j+2}\cdots c_{j+2}t_jc_{j+1}c_jc_{j+1}t_jc_{j+2} \cdots c_{j+2}f_{j+1}t_jc_{j+1}q_{j+2}\\
        &= c_jc_{j+1}t_jc_{j+2}\cdots c_{j+2}c_{j+1}c_{j+2} \cdots c_{j+2}f_{j+1}t_jc_{j+1}q_{j+2}\\
        &= c_jc_{j+1}t_jp_{j+2}c_{j+1}p_{j+2}f_{j+1}t_jc_{j+1}q_{j+2}= c_jc_{j+1}t_jf_{j+1}p_{j+2}c_{j+1}p_{j+2}t_jc_{j+1}q_{j+2}\\
        &= c_jf_jc_{j+1}\cdots c_{j+1}c_jc_{j+1}\cdots c_{j+1}q_{j+2}= e_jp_jp_{j+1}q_{j+2}= e_{\theta_J(j)}q_j
    \end{align*}
    which together with an analogous argument for $e_j$ completes the claim.
\end{proof}

As a consequence of the results above, we get the following corollary. 

\begin{corollary}
    The action of the cactus group $C_{\mathfrak{g}}$ on the crystal $B(n\varpi_1)$ in type $D_m$ is generated by elements corresponding to length 1 and 2 diagrams. This answers Conjecture \ref{cong:cactus toggle}, Part 2.
\end{corollary}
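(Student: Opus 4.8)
The plan is to derive the corollary directly from the two main theorems, \ref{thm:type A subdiagram} and \ref{thm:type D subdiagram}, by expressing every cactus generator $c_J$ in terms of length $1$ and $2$ diagram elements. First I would recall that by the Proposition following the definition of the cactus group, $C_{D_m}$ is generated by the generators $c_{\{1,\dots,j\}}$ for $1\le j\le m-1$, the generators $c_{\{j,\dots,m\}}$ for $1\le j\le m-2$, and (when $m$ is even) the extra generator $c_{\{1,\dots,m-2,m\}}$. So it suffices to show each of these acts the same (under the map to $\prod_n S_{B(n\lambda)}$) as a product of $c_i$ (length $1$) and $c_jt_{j-1}$-type pieces — but we must be careful, since the theorems are phrased using toggles $t_i$, not cactus elements. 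The key observation is that Proposition \ref{prop:Single node} identifies each $t_{m-1}=r_{m-1}$ and $t_m=r_m$ with the single-node cactus generators $c_{m-1},c_m$, and more generally gives $r_k\sim c_k$; inverting the inductive definition $r_k=r_{k+1}t_kr_{k+1}t_kr_{k+1}$ lets us solve for $t_k$ as a word in the $r_{k+1}=c_{k+1}$ and $r_k=c_k$, i.e. $t_k \sim c_{k+1}c_k c_{k+1}c_k c_{k+1}$ (using $c_{k+1}^2=1$ to rearrange), so that every toggle $t_i$ is itself a product of length $1$ cactus generators.

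Next I would run through the generating set. For a type-$A$ subdiagram $J=[i,j]$, Theorem \ref{thm:type A subdiagram} gives $c_J \sim (c_jt_{j-1}\cdots t_i)(c_jt_{j-1}\cdots t_{i+1})\cdots(c_jt_{j-1})c_j$; substituting the expressions for the $t_\bullet$ in terms of single-node cactus generators shows $c_J$ is a product of length $1$ elements, in particular the elements $c_{\{1,\dots,j\}}$ all reduce. (Alternatively, and more cleanly, one can simply note that each such $c_J$ already braids with adjacent single-node generators via the relation $c_Jc_K=c_Kc_{\theta_K(J)}$ when $K\subseteq J$, but the toggle expression is the concrete route taken here.) For a type-$D$ subdiagram $J=\{j,\dots,m\}$, Theorem \ref{thm:type D subdiagram} directly states $c_J\sim c_{i_1}\cdots c_{i_l}$ for any reduced word of $w_{0,J}$, which is manifestly a product of length $1$ cactus generators. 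Finally, for the sporadic generator $c_{\{1,\dots,m-2,m\}}$ present when $m$ is even — this is again a type-$A_{m-1}$ subdiagram (the chain $1$–$2$–$\cdots$–$(m-2)$–$m$), so Theorem \ref{thm:type A subdiagram}, applied with the path ending at the spin node $m$ (the case "$j=m$" in the statement), covers it, again reducing to lengths $1$ and $2$.

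Assembling these, every generator in the standard generating set of $C_{D_m}$ acts on $B(n\varpi_1)$ the same as a word in $\{c_i : i\in I\}\cup\{c_{\{k,k+1\}} : k\}$, and since the toggles themselves were rewritten in terms of single-node $c_i$, one in fact gets that the image of $C_{D_m}$ in $\prod_n S_{B(n\varpi_1)}$ is generated by length $1$ and length $2$ cactus elements — equivalently, the subgroup generated by longer cactus elements is redundant. I would then remark that this is precisely the content of Part 2 of Conjecture \ref{cong:cactus toggle}, and that Part 1 follows from Proposition \ref{prop:Single node} together with Theorems \ref{thm:type A subdiagram} and \ref{thm:type D subdiagram}, since these collectively exhibit the cactus action as factoring through $Tog(w_0^J)$. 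The main (and essentially only) obstacle here is bookkeeping: making sure that "acts the same as" ($\sim$) is used consistently — it is an equality of permutations of crystal elements, not of group elements — and that the substitution of toggles by cactus words is valid at the level of this action, which it is precisely because Proposition \ref{prop:Single node} is an equality in $\prod_n S_{B(n\lambda)}$; no genuinely new computation is needed beyond what the three preceding results already supply.
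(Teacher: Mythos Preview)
Your overall architecture is right: reduce to the standard generating set of $C_{D_m}$, then use Theorems~\ref{thm:type A subdiagram} and~\ref{thm:type D subdiagram} to rewrite each generator as a word in single-node cactus elements and toggles, and finally rewrite the toggles in terms of short cactus elements. This is exactly what the paper has in mind when it says the corollary follows from the preceding results.

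However, there is a genuine error in the step where you ``invert'' the recursion $r_k=r_{k+1}t_kr_{k+1}t_kr_{k+1}$ to obtain $t_k\sim c_{k+1}c_kc_{k+1}c_kc_{k+1}$. This relation does not determine $t_k$ as a word in $r_k,r_{k+1}$: it is a single equation in a nonabelian group and only yields $r_{k+1}r_kr_{k+1}\sim t_kr_{k+1}t_k$. In fact your proposed formula is false on $B(n\varpi_1)$. Substituting $c_k\sim c_{k+1}t_kc_{k+1}t_kc_{k+1}$ (Proposition~\ref{prop:Single node}) into $c_{k+1}c_kc_{k+1}c_kc_{k+1}$ gives $t_k(c_{k+1}t_k)^3$, and $(c_{k+1}t_k)^3\sim c_kt_k$, so with Lemma~\ref{lemma:cactus toggle commute} one finds $c_{k+1}c_kc_{k+1}c_kc_{k+1}\sim c_k$, not $t_k$. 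Since $c_k\not\sim t_k$ in general (e.g.\ take $k=1$ and $v=(1,1,\overline{2},\overline{2},\overline{1})$ in $B(5\varpi_1)$, where $c_1v=(1,2,\overline{1},\overline{1},\overline{1})$ but $t_1v=(2,2,\overline{2},\overline{1},\overline{1})$), your formula fails, and in particular $t_k$ is \emph{not} a word in length-$1$ cactus elements alone.

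The fix is immediate and already present in the results you cite: the $|J|=2$ case of Theorem~\ref{thm:type A subdiagram} gives $c_{\{k,k+1\}}\sim c_{k+1}t_kc_{k+1}$, hence $t_k\sim c_{k+1}\,c_{\{k,k+1\}}\,c_{k+1}$, a word in length-$1$ and length-$2$ cactus elements. Plugging this into the expressions from Theorem~\ref{thm:type A subdiagram} (and using Theorem~\ref{thm:type D subdiagram} for the type-$D$ subdiagrams, which is already purely in length-$1$ elements) completes the argument exactly as you outlined.
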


\end{document}